\newlength{\defbaselineskip}
\newcommand{\setlinespacing}[1]%
           {\setlength{\baselineskip}{#1 \defbaselineskip}}
\numberwithin{equation}{section}
\newtheorem{thm}{Theorem}[section]
\newtheorem{lem}[thm]{Lemma}
\newtheorem{prop}[thm]{Proposition}
\theoremstyle{definition}
\newtheorem{defn}[thm]{Definition}
\theoremstyle{remark}
\newtheorem{rem}[thm]{Remark}
\numberwithin{equation}{section}
\begin{document}
\title[Global attractor for the Kawahara equation]
{Global attractor for the weakly damped forced Kawahara equation on the torus}

\author{Jaeseop Ahn, Seongyeon Kim and Ihyeok Seo}

\thanks{This research was supported by the Research Grant of Jeonju University in 2024 (S. Kim), and by NRF-2022R1A2C1011312 (I. Seo).}

\subjclass[2020]{Primary: 35B40; Secondary: 35Q53 }
\keywords{global attractor, Kawahara equation}

\address{Department of Mathematics, Sungkyunkwan University, Suwon 16419, Republic of Korea}
\email{j.ahn@skku.edu}

\address{Department of Mathematics Education, Jeonju University, Jeonju 55069, Republic of Korea}
\email{sy\_kim@jj.ac.kr}

\address{Department of Mathematics, Sungkyunkwan University, Suwon 16419, Republic of Korea}
\email{ihseo@skku.edu}

\begin{abstract}
We study the long time behaviour of solutions for the weakly damped forced Kawahara equation on the torus.
More precisely, we prove the existence of a global attractor in $L^2$, to which as time passes all solutions draw closer.
In fact, we show that the global attractor turns out to lie in a smoother space $H^2$ and be bounded therein. 
Further, we give an upper bound of the size of the attractor in $H^2$ that depends only on the damping parameter and the norm of the forcing term.
\end{abstract}

\maketitle

\section{Introduction}\label{sec1}
In this paper we consider the weakly damped forced Kawahara equation on the torus:
\begin{equation}\label{fwdk}
\begin{aligned}
&\partial_t u +\alpha \partial_x^5 u+\beta \partial_x^3 u+\gamma u+ \frac12\partial_x(u^2)=f,\ \ x\in\mathbb T=\mathbb R/(2\pi\mathbb{Z}),\ \,t\ge0,\\
&u(x,0)=g(x)\in\dot{L}^2(\mathbb T):=\{h\in L^2(\mathbb T):\int_\mathbb Th(x)dx=0\},
\end{aligned}
\end{equation}
where $\alpha,\beta\in\mathbb R$ with $\alpha\ne0$. We assume $u$ and $f$ are real-valued, and the forcing term $f\in\dot{L}^2(\mathbb T)$ is independent of $t$. Here, \eqref{fwdk} is dissipative due to the damping parameter $\gamma>0$ (c.f. \eqref{absorb}) as opposed to its Hamiltonian counterpart with $\gamma=0$.
It should be noted that the mean-zero property is conserved over time since
$$\partial_t\int_\mathbb Tu(x,t)dx=-\gamma\int_\mathbb Tu(x,t)dx\quad\text{and}\quad\int_\mathbb Tu(x,0)dx=0.$$
This fifth-order KdV type equation has been derived to model the capillary waves on a shallow layer and the magneto-sound propagation in plasma (see e.g. \cite{Kaw,KS}).

Global attractors have been widely studied as a basic concept to describe the long time asymptotics of solutions to nonlinear PDEs with dissipation.
Given a globally well-posed problem with the associated semigroup operator $U(t):g\in H\rightarrow u(t)\in H$ with the phase space $H$, the study is focused on a compact invariant set $X\subset H$ (a global attractor) to which the orbit $u(t)$ converges as $t\rightarrow\infty$:
$$U(t)X=X,\quad t\ge0,\qquad\text{dist}(u(t),X)\rightarrow0.$$
We introduce some relevant definitions.

\begin{defn}
An attractor is a set $\mathcal A\subset H$ that is invariant under the flow $U(t)$ and possesses an open neigbourhood $\mathcal U$ such that, for every $g\in\mathcal U$, the distance $\text{dist}(U(t)g,\mathcal A)$ in $H$ goes to zero as $t \rightarrow \infty$.
We say that $\mathcal A$ attracts the points of $\mathcal U$ and we call the largest open such set $\mathcal U$ the basin of attraction of $\mathcal A$.
\end{defn}

\begin{defn}
If $\mathcal A\subset H$ is a compact attractor whose basin of attraction is the whole phase space $H$,
then $\mathcal A$ is called a global attractor for the semigroup $\{U(t)\}_{t\ge0}$.
\end{defn}

We postpone the issue of the global well-posedness of \eqref{fwdk} to Section \ref{sec2}, and first state the following result on the existence of the global attractor. 
 
\begin{thm}\label{ga}
Consider the weakly damped forced Kawahara equation \eqref{fwdk} with $\frac{3\beta}{5\alpha}\not\in\mathbb Z^+$. Then the equation possesses a global attractor in $\dot L^2$. Moreover, for any $s\in(0,2)$, it is a compact subset of $H^s$, and is bounded in $H^2$ by a constant depending only on $\alpha$, $\beta$, $\gamma$ and $\|f\|$ where $\|\cdot\|:=\|\cdot\|_{L^2(\mathbb T)}$.
\end{thm}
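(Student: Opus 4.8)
The plan is to invoke the standard criterion from the theory of infinite-dimensional dynamical systems: a continuous semigroup $\{U(t)\}_{t\ge0}$ that is \emph{dissipative} (admits a bounded absorbing set) and \emph{asymptotically compact} possesses a global attractor, realised as the $\omega$-limit set of any absorbing ball. I would therefore split the argument into (i) construction of a bounded absorbing set in $\dot L^2$, (ii) asymptotic compactness, and (iii) a bootstrap that upgrades the regularity of the attractor to $H^2$ with a quantitative bound. For (i), testing \eqref{fwdk} against $u$ and using that $\int_{\mathbb T}u\,\partial_x^5u=\int_{\mathbb T}u\,\partial_x^3u=\int_{\mathbb T}u\,\partial_x(u^2)=0$ on the torus yields
\begin{equation*}
\frac{d}{dt}\|u\|^2+\gamma\|u\|^2\le\frac1\gamma\|f\|^2,
\end{equation*}
so Gronwall gives $\limsup_{t\to\infty}\|u(t)\|\le\gamma^{-1}\|f\|$ and hence an absorbing ball $B_0\subset\dot L^2$ of radius $\sim\gamma^{-1}\|f\|$; this is the estimate referred to in \eqref{absorb}.

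For the asymptotic compactness I would write the Duhamel representation with the unitary Kawahara group $W(t)$ of symbol $e^{-it(\alpha k^5-\beta k^3)}$, namely
\begin{equation*}
u(t)=e^{-\gamma t}W(t)g+\int_0^te^{-\gamma(t-s)}W(t-s)f\,ds-\frac12\int_0^te^{-\gamma(t-s)}W(t-s)\partial_x\big(u^2(s)\big)\,ds,
\end{equation*}
and analyse the three pieces separately. The linear part $e^{-\gamma t}W(t)g$ decays to $0$ in $\dot L^2$. The forcing response is smoothing: on the Fourier side its $k$-th coefficient is $\widehat f(k)\frac{1-e^{-(\gamma-i\phi(k))t}}{\gamma-i\phi(k)}$ with $\phi(k)=\alpha k^5-\beta k^3\sim k^5$, so it is bounded in $H^2$ uniformly in $t$ (indeed it converges in $H^5$) because the denominator grows like $|k|^5$. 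The crucial point is the nonlinear Duhamel term, which I would control in Bourgain $X^{s,b}$ spaces via a \emph{bilinear smoothing} estimate of the form $\|\partial_x(uv)\|_{X^{s+a,b-1}}\lesssim\|u\|_{X^{s,b}}\|v\|_{X^{s,b}}$ with a gain $a>0$; summing the local-in-time estimates against the exponential weight $e^{-\gamma(t-s)}$ then shows this term stays uniformly bounded in $H^{s+a}$. Since the sum of the last two pieces lives in a space compactly embedded in $\dot L^2$ while the first decays, the trajectories are asymptotically compact and the global attractor $\mathcal A$ exists.

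To pin down the regularity of $\mathcal A$, I would use that every point of the attractor lies on a complete bounded trajectory defined for all $t\in\mathbb R$; sending the base time to $-\infty$ in the Duhamel formula kills the linear term and yields
\begin{equation*}
u(t)=\int_{-\infty}^te^{-\gamma(t-s)}W(t-s)\Big(f-\tfrac12\partial_x(u^2(s))\Big)\,ds.
\end{equation*}
Starting from the uniform $\dot L^2$ bound on $\mathcal A$, the forcing integral is bounded in $H^5$ and the bilinear estimate puts the nonlinear integral in $H^{a}$; feeding this improved regularity back into the same identity raises the exponent by $a$ at each step, and after finitely many iterations one reaches $H^2$, which is the maximal gain the bilinear estimate permits. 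Compactness in $H^s$ for $s\in(0,2)$ then follows from the compact embedding $H^2\hookrightarrow H^s$. Tracking the constants through the energy estimate and the bilinear inequality produces the claimed bound in terms of $\alpha,\beta,\gamma$ and $\|f\|$ alone.

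The main obstacle is the bilinear smoothing estimate and, with it, the role of the arithmetic hypothesis $\tfrac{3\beta}{5\alpha}\notin\mathbb Z^+$. The gain $a>0$ hinges on a lower bound for the resonance function $\Phi=\phi(k_1+k_2)-\phi(k_1)-\phi(k_2)$; the hypothesis precisely excludes the degenerate frequency configurations at which $\Phi$ would vanish, so that away from them one can exploit the $|k|^5$ growth of the dispersion to extract derivatives. A secondary technical difficulty is reconciling the \emph{local}-in-time nature of the $X^{s,b}$ estimates with the need for \emph{uniform}-in-time bounds: this requires decomposing $[0,t]$ (or $(-\infty,t]$) into unit intervals and summing the resulting estimates against the damping factor $e^{-\gamma(t-s)}$, checking that the series converges and that the regularity gain is not lost in the process.
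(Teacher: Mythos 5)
Your dynamical-systems skeleton coincides with the paper's: the energy identity giving the absorbing ball is exactly \eqref{absorb}--\eqref{ab}, the existence criterion is Theorem \ref{thmA} (Temam), the splitting into a decaying damped linear flow plus a smoother remainder is the paper's decomposition $U(t)g=e^{(-\gamma+L)t}g+N(t)g$, and the summation of local bounds against $e^{-\gamma(t-s)}$ over unit time intervals mirrors the paper's final step in Section \ref{sec4}. The genuine gap is the central analytic ingredient, which you assert rather than prove: a bilinear smoothing estimate $\|\partial_x(uv)\|_{X^{s+a,b-1}}\lesssim\|u\|_{X^{s,b}}\|v\|_{X^{s,b}}$ with gain $a>0$, iterated until you ``reach $H^2$''. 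A direct derivative-gaining bilinear estimate of this kind is precisely what \emph{fails} for KdV on the torus --- that failure is the reason the differentiation-by-parts/normal-form method of Erdo\u{g}an--Tzirakis \cite{ET2} exists, and it is the route the paper takes: substituting $u_k=w_ke^{-\gamma t-i(\alpha k^5-\beta k^3)t}$ and integrating by parts in time converts the quadratic nonlinearity into the boundary bilinear term $\mathcal B$ (which gains three derivatives without fighting the time integral at all, \eqref{prop1}), plus cubic resonant terms $\rho,\sigma$ and a nonresonant trilinear term $\mathcal R$, the latter estimated by $X^{s,b}$ duality together with the $L^6$ Strichartz estimate of Lemma \ref{l6}. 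Your observation that the hypothesis $\tfrac{3\beta}{5\alpha}\notin\mathbb Z^+$ keeps the resonance factor nonzero is correct, but it is the entry ticket to this analysis, not a substitute for it.

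Even granting that for Kawahara some direct bilinear gain survives (the resonance here is stronger: $|k_1(k-k_1)k\{5\alpha(k^2+k_1^2-kk_1)-3\beta\}|\gtrsim_{\alpha,\beta}k^4$, cf.\ \eqref{powerbound}), your quantitative accounting does not deliver the theorem. Only the \emph{maximum} of the three modulations is guaranteed to exceed the resonance, so the usable factor is $\langle\tau+\alpha k^5-\beta k^3\rangle^{\frac12-}\sim|k|^{2-}$ against the derivative loss $|k|$: in the high-low regime $|k_1|\sim|k|$, $|k_2|=O(1)$ one is left with $|k|^{a-1}|k_2|^{-s-\frac12}$, forcing $a<1$ per application. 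Iterating strictly-gain-$<1$ steps yields boundedness in $H^{2-\epsilon}$ for every $\epsilon>0$, which suffices for compactness in $H^s$, $s\in(0,2)$, but \emph{not} for the uniform $H^2$ bound the theorem asserts; and your claim that $H^2$ is ``the maximal gain the bilinear estimate permits'' has no mechanism behind it --- in the paper the cap $s\le2$ comes from the resonant cubic term $\sigma$ (via $k^2+j^2-kj>\tfrac12k^2$) and from the trilinear estimate \eqref{prop3}, objects that are invisible at the bilinear level. Your complete-trajectory device for attractor regularity is legitimate in itself (the paper instead proves the bound \eqref{co} for all solutions past the absorbing time and deduces $\omega(\mathcal B_0)\subset B$ directly), but as it stands the proposal's key estimate is unproven, its endpoint claim is unsupported, and the bootstrap cannot close at $s=2$.
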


Precisely the condition we need is $\frac{3\beta}{5\alpha}\ne k_1^2+k_2^2+k_1k_2$ for any $k_1,k_2\in\mathbb Z\setminus \{0\}$ which is weaker than $\frac{3\beta}{5\alpha}\not\in\mathbb Z^+$; without such, we encounter resonant terms that render it difficult to implement the arguments in this paper. Thus the condition shall be assumed throughout this paper and omitted henceforward. For the weakly damped forced KdV equation on the torus, the existence of a global attractor was shown by many authors; see \cite{ET2} and the references therein. 
However, such is unknown for the Kawahara equation \eqref{fwdk} yet, which is a fifth-order KdV type equation (See \cite{C} for a related result).

\

\noindent\textit{Notation.}\label{sec1.2}
Throughout this paper, the symbol $C$ stands for a positive constant which may be different at each occurrence.
We write $A\lesssim B$ to mean $A\le CB$ with an unspecified constant $C>0$.
We also write $A\sim B$ to denote both $A\lesssim B$ and $B\lesssim A$. We define $\langle\cdot\rangle:=1+|\cdot|$.
The Fourier transform and Fourier sequence of $u$ on $\mathbb R$ and $\mathbb T$ are given by
$$\widehat u(\tau)=\frac1{2\pi}\int_\mathbb R e^{-i\tau t} u(t)dt$$
and
$$u_k:=\widehat u(k)=\frac1{2\pi}\int_0^{2\pi}e^{-ikx}u(x)dx,\quad k\in\mathbb Z,$$
respectively.
For multivariable functions, $\widehat u$ denotes the Fourier transform of $u$ in all variables. We shall use the notations $\widehat u$ and $\mathcal Fu$ interchangeably, and for one variable transforms of multivariable functions we specify the variable. For example, $\mathcal F_xu$ denotes the space Fourier transform of $u$.
Note finally that for an $\dot L^2$ function $u$, $\|u\|_{H^s}\sim\|u_k|k|^s\|_{l^2}$. 

\

\noindent\textit{Outline of the paper.}
In Section \ref{sec2} we introduce function spaces and discuss energy bound. 
Although the conservation of energy does not hold for the damped equation \eqref{fwdk},
the energy remains bounded for positive times. 
Then we establish local well-posedness and draw global well-posedness from the energy bound. 
In Section \ref{sec3} we prove Theorem \ref{ga} by looking at linear and non-linear evolutions of the solution separately and investigating their asymptotic behaviours; the non-linear evolution lies in a smoother space by non-linear smoothing while the linear evolution vanishes.
We obtain the smoothing property in Section \ref{sec4} using some arguments similar to \cite{ET2}.
The final section, Section \ref{sec6}, is devoted to the proof of Lemmas \ref{lem2} and \ref{lem}, which are needed to set up the well-posedness in Section \ref{sec2}.

\section{Local and global well-posedness}\label{sec2}

For the periodic Kawahara equation ($\gamma=f=0$ in \eqref{fwdk}),
Hirayama \cite{H} proved that the solution is locally well-posed in $\dot{H}^s(\mathbb T)$ for $s\ge-1$ by adapting the argument used in  \cite{KPV} for the KdV equation. 
In \cite{Kat2}, Kato improved the result to $s\ge-3/2$, and proved $C^3$-ill-posedness for $s<-3/2$ as well as global well-posedness for $s\ge-1$.

Here we establish local and global well-posedness of the weakly damped forced case \eqref{fwdk}. First we present appropriate function spaces. 
Let $X^{s,b}$ denote the Bourgain space under the norm
$$\|u\|_{X^{s,b}}:=\|\langle k\rangle^s\langle\tau+\alpha k^5-\beta k^3\rangle^b\widehat u(k,\tau)\|_{l_k^2L_\tau^2}.$$
We also define, for $\delta\ge0$, the restricted space $X_{\delta}^{s,b}$ by the norm
$$\|u\|_{X_\delta^{s,b}}:=\inf_{\tilde u=u\ \text{on}\ t\in[0,\delta]}\|\widetilde u\|_{X^{s,b}}.$$
We further introduce the $Y^s$- and $Z^s$-spaces based on the ideas of Bourgain \cite{B} (cf. \cite{GTV,CKSTT1,ET2}), which are defined via the norms
\begin{align*}
	&\|u\|_{Y^s}:=\|u\|_{X^{s,\frac12}}+\|\langle k\rangle^s\widehat u(k,\tau)\|_{l_k^2L_\tau^1},\\
	&\|u\|_{Z^s}:=\|u\|_{X^{s,-\frac12}}+\bigg\|\frac{\langle k\rangle^s\widehat u(k,\tau)}{\langle\tau+\alpha k^5-\beta k^3\rangle}\bigg\|_{l_k^2L_\tau^1}.
\end{align*}
One defines $Y_\delta^s$, $Z_\delta^s$ accordingly. We note here that if $u\in Y^s$ then $u\in L_t^\infty H_x^s$ (In fact, $Y^s$ embeds into $C_tH_x^s$); the $X^{s,\frac12}$-norm alone fails to control the $L_t^\infty H_x^s$-norm of the solution.

The following theorem shows the well-posedness of \eqref{fwdk} in $L^2$. 

\begin{thm}\label{thm2}
The Cauchy problem \eqref{fwdk} is locally well-posed in $L^2$. Namely, there exist $\delta=\delta(\|f\|,\|g\|,\gamma)$ and a unique solution $u\in C([0,\delta];L_x^2(\mathbb T))\cap Y_\delta^0$ with
	\begin{equation}\label{bd}
	\|u\|_{X_\delta^{0,\frac12}}\le\|u\|_{Y_\delta^0}\lesssim\|g\|.
	\end{equation}
Furthermore, the local solution extends globally in time.
\end{thm}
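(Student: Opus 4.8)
The plan is to solve \eqref{fwdk} by a contraction-mapping argument in the restricted Bourgain space $Y^0_\delta$, exploiting the dispersive smoothing encoded in its norm. Writing the full linear part (damping included) as the semigroup $U(t)=e^{-\gamma t}W(t)$, where $\widehat{W(t)g}(k)=e^{-it(\alpha k^5-\beta k^3)}\widehat g(k)$, the Duhamel formula recasts the Cauchy problem as the fixed-point equation
\begin{equation*}
u(t)=\psi(t/\delta)\,U(t)g+\psi(t/\delta)\int_0^t U(t-t')\Big[f-\tfrac12\partial_x(u^2)(t')\Big]\,dt',
\end{equation*}
with $\psi$ a smooth temporal cutoff equal to $1$ on $[0,1]$. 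I would run the iteration on a ball $\{\,\|u\|_{Y^0_\delta}\le R\,\}$ with $R\sim\|g\|$ and $\delta$ to be fixed small at the end.

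First I would record the linear ingredients: the homogeneous bound $\|\psi(t/\delta)U(t)g\|_{Y^0}\lesssim\|g\|$ and the inhomogeneous (Duhamel) bound $\|\psi(t/\delta)\int_0^tU(t-t')F\,dt'\|_{Y^0}\lesssim\|F\|_{Z^0}$, the factor $e^{-\gamma t}$ being a bounded smooth multiplier on the support of $\psi(\cdot/\delta)$ that only affects implicit constants. The time-independent forcing contributes $\|\psi(t/\delta)f\|_{Z^0}\lesssim\delta^{\theta}\|f\|$ for some $\theta>0$ via the time-localization estimate $\|\psi(t/\delta)v\|_{X^{0,b'}}\lesssim\delta^{\,b-b'}\|v\|_{X^{0,b}}$. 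The heart of the matter is the bilinear estimate
\begin{equation*}
\big\|\partial_x(uv)\big\|_{Z^0}\lesssim\|u\|_{Y^0}\|v\|_{Y^0},
\end{equation*}
which I would invoke from Section \ref{sec6} (Lemmas \ref{lem2} and \ref{lem}); combined with the $\delta$-gain it yields $\|\tfrac12\partial_x(u^2)\|_{Z^0_\delta}\lesssim\delta^{\theta}\|u\|_{Y^0_\delta}^2$ together with the analogous Lipschitz bound for $u^2-\tilde u^2$. These make the map a contraction on the ball once $\delta=\delta(\|f\|,\|g\|,\gamma)$ is small, producing the unique local solution obeying \eqref{bd}; the embedding $Y^0\hookrightarrow C_tL^2_x$ noted above supplies the stated continuity in time.

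To extend the solution globally I would establish the a priori $L^2$ bound. Pairing \eqref{fwdk} with $u$ and integrating over $\mathbb T$, the dispersive contributions $\int u\,\partial_x^5u$ and $\int u\,\partial_x^3u$ vanish by integration by parts (odd order, periodicity), as does the nonlinear term $\int u\,\partial_x(u^2)=\tfrac23\int\partial_x(u^3)$, leaving
\begin{equation*}
\tfrac{d}{dt}\|u\|^2=-2\gamma\|u\|^2+2\int_{\mathbb T}fu\,dx\le-\gamma\|u\|^2+\tfrac1\gamma\|f\|^2 ,
\end{equation*}
so Gronwall gives $\|u(t)\|^2\le e^{-\gamma t}\|g\|^2+\gamma^{-2}\|f\|^2$ for all $t\ge0$ (this also furnishes the absorbing ball \eqref{absorb}). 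Since the local existence time depends only on $\|f\|$, $\gamma$, and the $L^2$-size of the data, and the latter stays uniformly bounded, I would iterate the local theorem with a fixed time step to cover $[0,\infty)$.

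The main obstacle is the bilinear estimate. Its proof hinges on the resonance relation for the quadratic interaction: with $k=k_1+k_2$ one computes
\begin{equation*}
\alpha(k^5-k_1^5-k_2^5)-\beta(k^3-k_1^3-k_2^3)=k_1k_2k\big[5\alpha(k_1^2+k_1k_2+k_2^2)-3\beta\big],
\end{equation*}
and the non-resonance hypothesis $\frac{3\beta}{5\alpha}\ne k_1^2+k_1k_2+k_2^2$ keeps the bracket bounded away from zero on nonzero frequencies. This lower bound on the modulation is precisely what supplies the derivative gain needed to absorb the $\partial_x$ in the nonlinearity, and carrying it out carefully in the $Y^0/Z^0$ framework is the reason the estimate is deferred to Section \ref{sec6}. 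A secondary technical point is securing the $\delta^\theta$ smallness at the scaling-critical $L^2$ level without sacrificing the $b=\pm\tfrac12$ endpoints, which is exactly what the $l^2_kL^1_\tau$ pieces of the $Y^0$- and $Z^0$-norms are designed to handle.
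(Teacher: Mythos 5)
Your proposal is correct and follows essentially the same route as the paper: Duhamel's formula plus a contraction argument in the ball $\{\|u\|_{Y^0_\delta}\le M\}$ using the linear estimates of Lemma \ref{lem2}, the time-localization gain and bilinear estimate of Lemma \ref{lem}, and global extension by iterating the local theory under the damped energy bound \eqref{absorb}. The only differences are cosmetic: you absorb the damping into the semigroup $e^{(-\gamma+L)t}$ whereas the paper keeps $\gamma u$ inside $F(u)$ and bounds it via \eqref{w5}, and note that the $\delta^\theta$ smallness must come from the sub-endpoint form of the bilinear estimate, $\|\partial_x(u_1u_2)\|_{Z^0_\delta}\lesssim\|u_1\|_{X^{0,\frac{3}{10}}_\delta}\|u_2\|_{X^{0,\frac{3}{10}}_\delta}$, combined with \eqref{w4} — not from time-localizing a $Y^0\times Y^0\to Z^0$ bound, since the $b=\frac12$ endpoint gains no power of $\delta$ — a subtlety you correctly flag in your closing paragraph.
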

Before proceeding, we look at dissipativity of \eqref{fwdk}. Whilst we do not have energy conservation, the energy remains bounded for positive times and in the long run it is less than $2\|f\|/\gamma$.

To obtain the energy bound, note that
\begin{align*}
	\partial_t\int_\mathbb Tu^2dx&=2\int_\mathbb Tu\partial_tudx\\
	&=-2\alpha\int_\mathbb Tu\partial_x^5udx-2\beta\int_\mathbb Tu\partial_x^3udx-\int_\mathbb Tu\partial_xu^2dx-2\gamma\int_\mathbb Tu^2dx+2\int_\mathbb Tfudx\\
	&=-2\gamma\int_\mathbb Tu^2dx+2\int_\mathbb Tfudx.
\end{align*}
Multiply by $e^{2\gamma t}$ and we get
$$\partial_t(e^{2\gamma t}\|u\|^2)=2e^{2\gamma t}\int_\mathbb Tfudx\le2e^{2\gamma t}\|f\|\|u\|$$
and
$$\frac{\partial_t(e^{2\gamma t}\|u\|^2)}{2e^{\gamma t}\|u\|}=\partial_t(e^{\gamma t}\|u\|)\le e^{\gamma t}\|f\|.$$
Therefore, for positive times
\begin{equation}\label{absorb}
	\|u(t)\|\le e^{-\gamma t}\|g\|+\frac{\|f\|}\gamma(1-e^{-\gamma t}).
\end{equation}
Thus for $t>T=T(\|f\|,\|g\|,\gamma)$, we have 
\begin{equation}\label{ab}
\|u(t)\|\le2\|f\|/\gamma.
\end{equation}
\begin{proof}[Proof of Theorem \ref{thm2}]
By Duhamel's principle, we wish to have a unique fixed point of the mapping $\Phi:Y_\delta^0\to Y_\delta^0$ given by
\begin{equation} \label{sol}
	\Phi(u)=e^{Lt}g-\int_0^te^{L(t-t')}F(u)(\cdot,t')dt',
\end{equation}
where $F(u)=\frac12\partial_x(u^2)+\gamma u-f$ and $L=-\alpha\partial_x^5-\beta\partial_x^3$.
We shall show that $\Phi$ defines a contraction map on 
$$\mathcal S:=\left\{u\in Y_{\delta}^0:\|u\|_{Y_{\delta}^0}\le M\right\}.$$
for large $M$. We first need the lemmas below.
\begin{lem}\label{lem2}
Let $s\in\mathbb R$. For any $\delta<1$, the following estimates hold:
	\begin{align}
		\label{w1}&\|e^{Lt}g\|_{Y_\delta^s}\lesssim\|g\|_{H^s},\\
		\label{w2}&\bigg\|\int_0^te^{L(t-t')}F(\cdot,t')dt'\bigg\|_{Y_\delta^s}\lesssim\|F\|_{Z_\delta^s}.
	\end{align}
\end{lem}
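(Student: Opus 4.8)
The plan is to treat the two estimates separately, reducing both to one-dimensional estimates in the time variable after taking the spatial Fourier transform. Throughout, I fix a smooth cutoff $\psi$ equal to $1$ on $[0,1]$ (hence on $[0,\delta]$ for $\delta<1$); since the $Y_\delta^s$- and $Z_\delta^s$-norms are infima over extensions agreeing on $[0,\delta]$, it suffices to bound the cutoff quantities $\psi(t)e^{Lt}g$ and $\psi(t)\int_0^te^{L(t-t')}\tilde F\,dt'$ (for the optimal extension $\tilde F$ of $F$) in the unrestricted spaces. Write $\phi(k):=\alpha k^5-\beta k^3$, so that $e^{Lt}$ acts on the $k$-th spatial mode by the multiplier $e^{-it\phi(k)}$ and the modulation weight is $\langle\tau+\phi(k)\rangle$.

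For the homogeneous estimate \eqref{w1}, a direct computation gives $\mathcal F_{x,t}[\psi(t)e^{Lt}g](k,\tau)=g_k\,\widehat\psi(\tau+\phi(k))$. Inserting this into the two pieces of the $Y^s$-norm and changing variables $\sigma=\tau+\phi(k)$ in each mode decouples $g$ from $\psi$: the $X^{s,\frac12}$-part becomes $\|\langle k\rangle^sg_k\|_{l_k^2}\,\|\langle\sigma\rangle^{\frac12}\widehat\psi\|_{L_\sigma^2}$ and the $l_k^2L_\tau^1$-part becomes $\|\langle k\rangle^sg_k\|_{l_k^2}\,\|\widehat\psi\|_{L^1}$. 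As $\psi$ is Schwartz, both trailing factors are finite, yielding \eqref{w1}.

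For \eqref{w2}, taking the spatial Fourier transform reduces the Duhamel term on the $k$-th mode to $\psi(t)e^{-it\phi(k)}\int_0^th(s)\,ds$ with $h(s)=e^{is\phi(k)}\mathcal F_xF(k,s)$. A frequency shift $\sigma\mapsto\sigma-\phi(k)$ turns the weight $\langle\tau+\phi(k)\rangle$ into $\langle\sigma\rangle$ and is uniform in $k$, so after carrying the weights $\langle k\rangle^s$ into the $l_k^2$ sum it suffices to prove the scalar-in-time estimate $\|\langle\sigma\rangle^{\frac12}\mathcal F_t[\psi H]\|_{L^2}+\|\mathcal F_t[\psi H]\|_{L^1}\lesssim\|\langle\sigma\rangle^{-\frac12}\widehat h\|_{L^2}+\|\langle\sigma\rangle^{-1}\widehat h\|_{L^1}$, where $H(t)=\int_0^th(s)\,ds$. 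Writing $H(t)=\int\frac{e^{it\sigma}-1}{i\sigma}\widehat h(\sigma)\,d\sigma$, I would split $\widehat h$ into its low-modulation part $|\sigma|\le1$ and high-modulation part $|\sigma|>1$. On $|\sigma|>1$ the two terms of $\frac{e^{it\sigma}-1}{i\sigma}$ separate into a piece with Fourier transform $\widehat h(\sigma)/(i\sigma)$---for which multiplication by $\psi$ becomes convolution with the rapidly decaying $\widehat\psi$, handled via $\langle\sigma\rangle^{\frac12}\lesssim\langle\sigma-\sigma'\rangle^{\frac12}\langle\sigma'\rangle^{\frac12}$ and Young's inequality---and a constant term $c_0\psi(t)$ with $|c_0|\lesssim\|\langle\sigma\rangle^{-1}\widehat h\|_{L^1}$, which is exactly the $L_\tau^1$ part of the $Z^s$-norm. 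On $|\sigma|\le1$ I would use $\frac{e^{it\sigma}-1}{i\sigma}=t\int_0^1e^{it\theta\sigma}\,d\theta$ together with the fact that $\psi(t)t$ is Schwartz.

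The main obstacle is the endpoint exponent $b=\frac12$: the naive $X^{s,-\frac12}\to X^{s,\frac12}$ gain fails here (it is only logarithmically short), which is precisely why the $Y^s$- and $Z^s$-norms augment the Bourgain norms with the $l_k^2L_\tau^1$ pieces. Consequently the delicate bookkeeping lies in the low-modulation region and the constant term $c_0$: these cannot be absorbed by the $L^2$-based modulation weights and must instead be controlled by the $L_\tau^1$ parts of the $Z^s$- and $Y^s$-norms. Once this endpoint accounting is done carefully, the remainder is a routine application of Young's inequality and the rapid decay of $\widehat\psi$; the overall scheme follows the Bourgain-space estimates of \cite{B,GTV,CKSTT1,ET2}.
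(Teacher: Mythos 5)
Your proposal is correct, and while your proof of \eqref{w1} coincides with the paper's (cutoff, the explicit formula $\mathcal F\big(\psi(t)e^{Lt}g\big)(k,\tau)=g_k\widehat\psi(\tau+\alpha k^5-\beta k^3)$, and rapid decay of $\widehat\psi$), for the inhomogeneous estimate \eqref{w2} you take a genuinely different route. You reduce, via the spatial Fourier transform and the $k$-uniform modulation shift $\sigma=\tau+\alpha k^5-\beta k^3$, to a scalar-in-time estimate for $\psi(t)\int_0^t h(s)\,ds$, proved by writing $\int_0^t h=\int\frac{e^{it\sigma}-1}{i\sigma}\widehat h(\sigma)\,d\sigma$ and splitting into low and high modulations; this is the classical argument in the style of \cite{GTV} and \cite{Ta}. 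The paper instead follows \cite{CKSTT1}: after localising $F$ to $\mathbb T\times[-3,3]$ it uses the identity $\chi_{[0,t]}(t')=\frac{\mathrm{sgn}(t)}{2}\big(a(t')+a(t-t')\big)$ with $a(t)=\mathrm{sgn}(t)\tilde\eta(t)$, splitting the Duhamel term into a boundary term $\eta(t)e^{Lt}\int_{\mathbb R} a(t')e^{-Lt'}F(t')\,dt'$, handled by \eqref{w1}, and a convolution term whose space--time Fourier transform is simply $2\pi\widehat a(\tau+\alpha k^5-\beta k^3)\widehat F(k,\tau)$; everything then rests on the single bound $|\widehat a|\lesssim\langle\cdot\rangle^{-1}$, with no modulation case analysis. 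The two arguments are equivalent in content: your constant term $c_0\psi(t)$, controllable only through the $l_k^2L_\tau^1$ piece of the $Z^s$-norm, plays exactly the role of the paper's boundary term, and your remark about the failure of the endpoint $X^{s,-\frac12}\to X^{s,\frac12}$ gain correctly identifies why the auxiliary $L_\tau^1$ components of $Y^s$ and $Z^s$ are indispensable. What each buys: your version is self-contained and makes visible precisely where each component of the norms is consumed, at the cost of the low/high case bookkeeping; the paper's signum trick packages that splitting into the decay of $\widehat a$ and is shorter. When writing yours up in full, make explicit the two small points you currently leave implicit: in the low-modulation term, $\int_{|\sigma|\le1}|\widehat h|\,d\sigma\lesssim\|\langle\sigma\rangle^{-1}\widehat h\|_{L^1}$ (since $\langle\sigma\rangle\sim1$ there), and $\langle\tau\rangle\sim\langle\tau-\theta\sigma\rangle$ uniformly for $|\theta\sigma|\le1$ so that the weight $\langle\tau\rangle^{\frac12}$ can be absorbed by the Schwartz decay of $\mathcal F(t\psi)$; both are immediate, so there is no gap.
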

\begin{lem}\label{lem}
Let $s\ge0$ and $-1/2<b<b'<1/2$. 
For any $\delta<1$, the following estimates hold:
\begin{align}
\label{w4}&\|u\|_{X_\delta^{s,b}}\lesssim_{b,b'}\delta^{b'-b}\|u\|_{X_\delta^{s,b'}},\\
\label{w5}&\|u\|_{Z_\delta^s} \lesssim\delta^{1-}\|u\|_{Y_\delta^s},\\
\label{w3}&\|\partial_x(u_1u_2)\|_{Z_\delta^s}\lesssim_{\alpha,\beta,s}\|u_1\|_{X_\delta^{s,\frac3{10}}}\|u_2\|_{X_\delta^{s,\frac3{10}}}
	\end{align}
where $u_1,u_2$ are mean-zero functions. 
\end{lem}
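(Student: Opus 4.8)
The plan is to dispatch the three estimates in order: \eqref{w5} will be a quick consequence of \eqref{w4} together with the structure of the $Z^s$ norm, whereas \eqref{w3} is the genuinely new bilinear estimate and the main obstacle.

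For \eqref{w4} I would reduce to a one–variable time–localization lemma by conjugating out the linear propagator $e^{tL}$. Since the symbol of $L$ is $-i(\alpha k^5-\beta k^3)$, one has $\|u\|_{X^{s,b}}=\|e^{-tL}u\|_{H_t^bH_x^s}$, and because multiplication by a cutoff $\eta(t/\delta)$ acts only in $t$ it commutes with $e^{-tL}$; thus the weight $\langle k\rangle^s$ and the $l_k^2$ summation factor out and the problem reduces, for each fixed frequency, to the claim that for $-1/2<b<b'<1/2$ one has $\|\eta(t/\delta)w\|_{H_t^b}\lesssim\delta^{b'-b}\|w\|_{H_t^{b'}}$. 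This is classical: the base case $b=0$ follows from the Sobolev embedding $H^{b'}(\mathbb R)\hookrightarrow L^p$ with $1/p=1/2-b'$ and Hölder's inequality, using $\|\eta(\cdot/\delta)\|_{L^{1/b'}}=\delta^{b'}\|\eta\|_{L^{1/b'}}$, while the remaining range is obtained by duality and interpolation. The passage between the global and the restricted $X_\delta^{s,b}$ norms is handled in the usual way, by multiplying a near-optimal extension of $u$ from $[0,\delta]$ by $\eta(t/\delta)$, which is again an extension.

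For \eqref{w5} I would not need the full $Y^s$ norm: it suffices to bound $\|u\|_{Z_\delta^s}$ by $\|u\|_{X_\delta^{s,1/2}}$ with a factor $\delta^{1-}$. The first summand is monotone in the modulation exponent, so $\|u\|_{X_\delta^{s,-1/2}}\le\|u\|_{X_\delta^{s,-1/2+\varepsilon}}$, and \eqref{w4} with exponents $-1/2+\varepsilon<1/2-\varepsilon$ produces $\delta^{1-2\varepsilon}$. For the second summand a Cauchy--Schwarz in $\tau$, splitting $\langle\tau+\alpha k^5-\beta k^3\rangle^{-1}=\langle\cdot\rangle^{-1/2+a}\langle\cdot\rangle^{-1/2-a}$ and integrating the square-summable tail $\langle\cdot\rangle^{-1-2a}$ for any $a>0$, bounds it by $\|u\|_{X_\delta^{s,-1/2+a}}$, to which the same reasoning applies. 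Choosing $\varepsilon,a$ small yields the exponent $1-$.

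The heart of the matter is \eqref{w3}. I would dualize and reduce to a trilinear estimate, writing the product on the Fourier side as a convolution over $k=k_1+k_2,\ \tau=\tau_1+\tau_2$ and pairing against a test function in $X^{-s,1/2}$. The decisive quantity is the resonance function $h:=\sigma-\sigma_1-\sigma_2$, with $\sigma=\tau+\alpha k^5-\beta k^3$ and likewise $\sigma_j$, which on the convolution support equals
\[
h=\alpha\big(k^5-k_1^5-k_2^5\big)-\beta\big(k^3-k_1^3-k_2^3\big)
=5\alpha\,k_1k_2k\Big[\,k_1^2+k_1k_2+k_2^2-\tfrac{3\beta}{5\alpha}\,\Big].
\]
Here the standing hypothesis $\tfrac{3\beta}{5\alpha}\ne k_1^2+k_1k_2+k_2^2$ for nonzero integers enters: since $u_1,u_2$ are mean-zero the frequencies $k_1,k_2$ and hence $k$ are nonzero, the bracket is bounded away from $0$, and one obtains $|h|\gtrsim_{\alpha,\beta}|k_1k_2k|\langle k_{\max}\rangle^2$ in the large-frequency regime and $|h|\gtrsim|k_1k_2k|$ in general. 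Because $\max(\langle\sigma\rangle,\langle\sigma_1\rangle,\langle\sigma_2\rangle)\gtrsim\langle h\rangle$, at least one modulation is large, and this degree-five gain is exactly what absorbs the derivative $|k|$ produced by $\partial_x$, with room to spare at the modest exponent $3/10$. I would then split into cases according to which modulation is largest; in each case the surplus powers of $\langle h\rangle$ are spent to cancel $|k|$ and to run the $\tau$-integrations and $k$-summations by Cauchy--Schwarz together with an elementary counting bound for the resonant frequency splittings (equivalently a periodic $L^4$-type estimate), the weight $\langle k\rangle^s$ being placed on the larger of $\langle k_1\rangle,\langle k_2\rangle$ since $s\ge0$. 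The second, $l_k^2L_\tau^1$ piece of the $Z_\delta^s$ norm is handled by the same Cauchy--Schwarz-in-$\tau$ device as in \eqref{w5}. The main difficulty is precisely the bookkeeping of this case analysis: checking that in every modulation regime the resonance lower bound leaves enough smoothing to beat the derivative loss uniformly in the frequencies, with constants depending only on $\alpha,\beta,s$ through the gap $\mathrm{dist}\big(\tfrac{3\beta}{5\alpha},\{k_1^2+k_1k_2+k_2^2\}\big)$.
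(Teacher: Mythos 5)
Your handling of \eqref{w4} is the standard conjugation-plus-interpolation argument and is fine, and your route to \eqref{w5} — Cauchy--Schwarz in $\tau$ to reduce the $l_k^2L_\tau^1$ piece to $\|u\|_{X_\delta^{s,-1/2+a}}$, then \eqref{w4} — is a legitimate variant of the paper's proof (the paper instead uses H\"older in $\tau$ plus Young's inequality against the dilated cutoff, which bounds that piece by the $l_k^2L_\tau^1$ component of the $Y^s$-norm rather than by $X^{s,\frac12}$; both give $\delta^{1-}$). For \eqref{w3} your skeleton coincides with the paper's: the resonance factorization $h=k_1k_2k\,\{5\alpha(k_1^2+k_1k_2+k_2^2)-3\beta\}$, the nonresonance hypothesis keeping the bracket away from zero, the lower bound $|h|\gtrsim_{\alpha,\beta}k^4$ (this is \eqref{powerbound}), and the trichotomy \eqref{fflem1c1}--\eqref{fflem1c3} according to the maximal modulation. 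Your Cauchy--Schwarz reduction of the $l_k^2L_\tau^1$ part of the $Z^s$-norm to a bilinear bound at modulation exponent $-\frac12+a$ also works here, since the estimate has slack (the paper notes one can reach $X_\delta^{s,-\frac3{10}}$ on the left); the paper instead estimates that part directly, with the case \eqref{fflem1c1} requiring the auxiliary pairing against a unit $l^2$ sequence.

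The genuine gap is the $L^4$ estimate. Every branch of the case analysis ultimately rests on $\|\chi_{[0,\delta]}(t)f\|_{L^4(\mathbb T\times\mathbb R)}\lesssim_{\alpha,\beta}\|f\|_{X_\delta^{0,\frac3{10}}}$ (estimate \eqref{t2propeq}) and its dual \eqref{t2propcor}, and you invoke this as ``an elementary counting bound for the resonant frequency splittings.'' For the fifth-order propagator it is neither citable nor a divisor-counting consequence: divisor counting gives the $L^6$ bound of Lemma \ref{l6}, which carries an $\epsilon$-derivative loss and a $b>\frac12$ requirement, and so cannot produce the loss-free bilinear estimate at $b=\frac3{10}$. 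The actual proof of \eqref{t2propeq} is the bulk of the paper's Section \ref{sec6}: one reduces $\|f\|_{L^4}^4$ to $\|\sum_n e^{i\Gamma_\Delta(n)t}v_n\overline{v_{n+\Delta}}\|^2_{l_\Delta^2L_t^2}$ with $\Gamma_\Delta(n)=\{-5\alpha(n^2+\Delta^2+n\Delta)+3\beta\}\,n\Delta(n+\Delta)$, exploits that for fixed $\Delta$ the gaps of $\{\Gamma_\Delta(n)\}_n$ are at least $C_{\alpha,\beta}\Delta^4$, and runs a Littlewood--Paley decomposition in the time frequency with the three regimes $\Delta^5\le2^j$, $\Delta^4\le2^j<\Delta^5$, $2^j<\Delta^4$, splitting $n$ into residue classes mod $d\sim2^j/\Delta^4$ to gain almost-orthogonality. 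Moreover, the exponent $\frac3{10}=\frac14\bigl(1+\frac1{1+\kappa}\bigr)$ with $\kappa=4$ is exactly the threshold this gap structure yields (Remark \ref{remref}), so your assertion that the degree-five resonance gain absorbs the derivative ``with room to spare at the modest exponent $3/10$'' mislocates where the constraint binds: the resonance gain is not what fixes $b=\frac3{10}$; the $L^4$ estimate is, and without its proof the exponent in \eqref{w3} is unsupported.
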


\begin{rem}
Applying the forcing $f$ to \eqref{w5}, we get a stronger estimate:
	\begin{equation}\label{157}
	\|f\|_{Z_\delta^s} \lesssim\delta^{1-}\|f\|_{H^s}.
	\end{equation}
See the first paragraph in Subsection \ref{subsec5.2}.
\end{rem}

Assuming these lemmas for the moment, which will be proved in Section \ref{sec6}, we first show that $\Phi(u)\in Y_\delta^0$ for $u\in Y_\delta^0$.
For this, we apply \eqref{w1} and \eqref{w2} to the linear term and the Duhamel term in \eqref{sol} respectively to get
$$\|e^{Lt}g\|_{Y_\delta^0} \le C\|g\|,$$
\begin{equation}\label{duh}
\left\| \int_0^te^{L(t-t')}F(u)(\cdot,t')dt'\right\|_{Y_\delta^0}
\le C( \|\partial_x(u^2)\|_{Z_\delta^0}+\gamma\|u\|_{Z_\delta^0}+\|f\|_{Z_\delta^0}).
\end{equation}
Using Lemma \ref{lem} and \eqref{157}, we bound the right-hand side of \eqref{duh} as 
\begin{align*}
C(\|\partial_x(u^2)\|_{Z_\delta^0}+\gamma\|u\|_{Z_\delta^0}+\|f\|_{Z_\delta^0})
&\le C (\|u\|_{{X^{0,\frac3{10}}_\delta}}^2+\gamma\delta^{1-}\|u\|_{Y_\delta^0}+\delta^{1-}\|f\|)\\
&\le C( \delta^{\frac{2}{5}-}\|u\|_{{X_\delta^{0,\frac{1}{2}-}}}^2+\gamma\delta^{1-}\|u\|_{Y_\delta^0}+
\delta^{1-}\|f\|).
\end{align*}
Since $\|u\|_{X_\delta^{0,\frac{1}{2}-}}\le\|u\|_{X_\delta^{0,\frac12}}\le\|u\|_{Y_\delta^{0}}$,
we therefore get 
$$\|\Phi(u)\|_{Y_\delta^0} \le C\|g\|+  C (\delta^{\frac{2}{5}-} M^2+\gamma\delta^{1-}M+\delta^{1-}\|f\|).$$
For a fixed $M\ge2C(\|g\|+\|f\|/\gamma)$, if we take $\delta$ sufficiently small so that 
\begin{equation}\label{qs}
C(\delta^{\frac{2}{5}-}M+\gamma\delta^{1-})\le \frac{1}{2},
\end{equation}
then we arrive at $\|\Phi(u)\|_{Y_\delta^0}\le M.$

Next we show that $\Phi$ is a contraction on $\mathcal S$. Again using Lemma \ref{lem}, we see that
\begin{align*}
\|\Phi (u)-\Phi (v)\|_{Y_\delta^0}&\le C ( \|\partial_x(u^2-v^2)\|_{Z_\delta^0}+\gamma\left\|u-v\right\|_{Z_\delta^0} )\\
&\le C(\delta^{\frac{2}{5}-}\|u+v\|_{Y_\delta^{0}}+\gamma\delta^{1-})\|u-v\|_{Y_\delta^{0}}\\
&\le C (\delta^{\frac{2}{5}-}M+\gamma\delta^{1-})\|u-v\|_{Y_\delta^0}.
\end{align*}
With the same choice of $\delta$ as before, by \eqref{qs}, we confirm that $\Phi$ is a contraction on $\mathcal{S}$. 
Now the contraction mapping theorem asserts that there exists a unique solution $u\in\mathcal S$.

Finally the global well-posedness follows from the fact that the energy $\|u(t)\|$, by \eqref{absorb}, is bounded by $\|g\|+\|f\|/\gamma$ at all times $t\ge0$.
\end{proof}

\section{Global attractor}\label{sec3}
In this section we prove Theorem \ref{ga} with a known criterion for the existence of a global attractor, Theorem \ref{thmA}. For that we begin with a definition.

\begin{defn}
	Let $\mathcal B_0$ be a bounded set of $H$ and $\mathcal U$ an open set containing $\mathcal B_0$. We say that $\mathcal B_0$ is absorbing in $\mathcal U$ if for any bounded $\mathcal B\subset\mathcal U$ there exists $T = T(\mathcal B)$ such that $U(t)\mathcal B\subset\mathcal B_0$ for all $t\ge T$. We also say that $\mathcal B_0$ absorbs the bounded subsets of $\mathcal U$.
\end{defn}

Now we state how to construct a global attractor from an absorbing set.

\begin{thm}[\cite{Te}]\label{thmA}
	Let $H$ be a metric space and $ U(t):H\rightarrow H$ be a continuous semigroup for all $t\ge0$. Assume that there exist an open set $\mathcal U$ and a bounded set $\mathcal B_0$ of $\mathcal U$ such that $\mathcal B_0$ is absorbing in $\mathcal U$. If $\{U(t)\}_{t\ge0}$ is asymptotically compact, i.e. for every bounded sequence $x_j$ in $H$ and every sequence $t_j\rightarrow\infty$, $\{U(t_j)x_j\}_j$ is precompact in $H$, then 
	$$\omega(\mathcal B_0):=\bigcap_{s\ge0}\overline{\bigcup_{t\ge s}U(t){\mathcal B_0}}$$ is a compact attractor with basin $\mathcal U$ and it is maximal for the inclusion relation. (Here the closure is taken on $H$.)
\end{thm}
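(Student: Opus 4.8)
The plan is to realize $\omega(\mathcal B_0)$ as the set of all subsequential limits of orbits emanating from $\mathcal B_0$, and then to verify, one by one, the three defining properties of a compact attractor—compactness, invariance, and attraction—together with maximality, using asymptotic compactness as the workhorse throughout.

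First I would record the sequential description of the $\omega$-limit set: since $H$ is a metric space, unwinding the nested intersection of closures shows that $y\in\omega(\mathcal B_0)$ precisely when there exist $x_j\in\mathcal B_0$ and $t_j\to\infty$ with $U(t_j)x_j\to y$. This is a purely topological lemma (closures are sequential in a metric space, and a diagonal argument handles the nesting), and it converts every later assertion into a statement about extracting limits of sequences of the form $U(t_j)x_j$. With it in hand, nonemptiness and compactness come directly: fixing one point of $\mathcal B_0$ and any $t_j\to\infty$ produces, by asymptotic compactness, a convergent subsequence whose limit lands in $\omega(\mathcal B_0)$; and an arbitrary sequence in $\omega(\mathcal B_0)$ can be approximated to within $1/j$ by points $U(t_j)x_j$, which then subconverge by asymptotic compactness, so—$\omega(\mathcal B_0)$ being closed as an intersection of closed sets—it is compact.

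Attraction I would prove by contradiction. If $\mathrm{dist}(U(t)\mathcal B_0,\omega(\mathcal B_0))\not\to0$ (Hausdorff semidistance), then some $U(t_j)x_j$ stays a fixed distance $\varepsilon>0$ from $\omega(\mathcal B_0)$; asymptotic compactness extracts a limit, which by the sequential description lies \emph{in} $\omega(\mathcal B_0)$, a contradiction. The absorbing hypothesis then upgrades attraction of $\mathcal B_0$ to attraction of every bounded $\mathcal B\subset\mathcal U$ through the semigroup law $U(t+T)\mathcal B=U(t)\bigl(U(T)\mathcal B\bigr)\subset U(t)\mathcal B_0$, so that $\mathcal U$ lies in the basin.

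For invariance $U(t)\omega(\mathcal B_0)=\omega(\mathcal B_0)$, the forward inclusion is immediate from continuity of $U(t)$ together with $t+t_j\to\infty$. The reverse inclusion is the step I expect to be the main obstacle, because the semigroup need not be invertible: given $y=\lim U(t_j)x_j$, I would look at the pre-shifted orbits $U(t_j-t)x_j$ for $t_j>t$, invoke asymptotic compactness to extract a convergent subsequence with limit $z\in\omega(\mathcal B_0)$, and then use continuity of $U(t)$ to conclude $U(t)z=y$. This reconstruction of backward orbits is exactly where asymptotic compactness does work that mere compactness of $\omega(\mathcal B_0)$ could not supply. Finally, maximality follows because any bounded invariant $Y\subset\mathcal U$ satisfies $\mathrm{dist}(Y,\omega(\mathcal B_0))=\mathrm{dist}(U(t)Y,\omega(\mathcal B_0))\to0$, forcing $Y\subset\omega(\mathcal B_0)$ since the latter is closed.
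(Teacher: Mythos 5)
The paper does not prove this theorem at all---it is quoted verbatim from Temam \cite{Te} and used as a black box---so there is no internal proof to compare against. Your argument is correct and is essentially the standard proof from Temam's book: the sequential characterisation of $\omega(\mathcal B_0)$ in a metric space, asymptotic compactness yielding nonemptiness, compactness and attraction (the latter by contradiction), the backward-orbit extraction via $U(t_j-t)x_j$ for the reverse inclusion in invariance, and absorption upgrading attraction of $\mathcal B_0$ to all bounded subsets of $\mathcal U$, whence maximality.
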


The following proposition, which will be proved in the next section, is enough to check the hypothesis of the above theorem. 
The proposition says that every $\dot L^2$ solution has its nonlinear part “trapped” in a ball in $H^2$ centred at zero of radius depending only on $\|f\|$, $\|g\|$ and $\gamma$.
One can explicitly calculate an upper bound for this radius by keeping track of the constants in our proof.

\begin{prop}\label{thm1}
		Let $u$ be the solution to \eqref{fwdk}. 
		Then for $L=-\alpha\partial_x^5-\beta\partial_x^3$, $$u(t)-e^{-\gamma t}e^{Lt}g\in C_t^0H_x^2.$$ 
		Furthermore,
		\begin{equation}\label{j}
		\|u(t)-e^{-\gamma t}e^{Lt}g\|_{H^2}\le C(\|f\|,\|g\|,\gamma).
		\end{equation}
	\end{prop}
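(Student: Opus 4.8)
The plan is to start from Duhamel's formula. Writing $L-\gamma$ for the full linear part, the semigroup factors as $e^{t(L-\gamma)}=e^{-\gamma t}e^{tL}$, so that
\begin{equation*}
u(t)=e^{-\gamma t}e^{tL}g+\int_0^t e^{-\gamma(t-t')}e^{(t-t')L}\Big(f-\tfrac12\partial_x(u^2)\Big)(t')\,dt',
\end{equation*}
and hence the object to control is
\begin{equation*}
w(t):=u(t)-e^{-\gamma t}e^{tL}g=\underbrace{\int_0^t e^{-\gamma(t-t')}e^{(t-t')L}f\,dt'}_{=:N_f(t)}-\tfrac12\underbrace{\int_0^t e^{-\gamma(t-t')}e^{(t-t')L}\partial_x(u^2)\,dt'}_{=:N(t)}.
\end{equation*}
I would first dispose of the forcing part $N_f$. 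Since $f$ is time-independent and $e^{tL}$ is the Fourier multiplier $e^{-i\omega(k)t}$ with $\omega(k)=\alpha k^5-\beta k^3$, the $k$-th mode of $N_f(t)$ is $\widehat f(k)\,(1-e^{-(\gamma+i\omega(k))t})/(\gamma+i\omega(k))$, whose modulus is $\le 2|\widehat f(k)|/\sqrt{\gamma^2+\omega(k)^2}$ uniformly in $t$. As $|\omega(k)|\sim|k|^5$ on the mean-zero modes (the finitely many $k$ with $\omega(k)$ near $0$ being controlled by the denominator floor $\gamma$), this already yields $\|N_f(t)\|_{H^2}\lesssim_{\alpha,\beta,\gamma}\|f\|$ for all $t\ge0$; the strong fifth-order dispersion furnishes far more than the two derivatives needed.

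For the nonlinear part $N$ I would invoke the smoothing estimate of Section \ref{sec4}: for mean-zero inputs the Duhamel operator for $\partial_x(u^2)$ gains two derivatives, mapping $X_\delta^{0,\frac12}$-data into $C_t^0H_x^2$ with $\|N\|_{C_tH^2([0,\delta])}\lesssim\|u\|_{X_\delta^{0,\frac12}}^2$ on each short interval of length $\delta<1$. The damping factor is harmless here, since $e^{-\gamma(t-t')}e^{(t-t')L}=e^{-\gamma t}e^{(t-t')L}e^{\gamma t'}$ costs at most $e^{\gamma\delta}$ over such an interval. Because this estimate is only local, the real work is to globalize it with a bound uniform in $t$. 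Exploiting the semigroup property I would restart the integral at $t-\delta$: for $t\ge\delta$,
\begin{equation*}
N(t)=e^{-\gamma\delta}e^{\delta L}N(t-\delta)+\int_{t-\delta}^t e^{-\gamma(t-t')}e^{(t-t')L}\partial_x(u^2)\,dt'.
\end{equation*}
By the local smoothing estimate, \eqref{bd} applied with initial time $t-\delta$ (the equation being autonomous), and the energy bound \eqref{absorb}, the last integral is bounded in $H^2$ by $C\|u(t-\delta)\|^2\le C(\|g\|+\|f\|/\gamma)^2=:C_1$, uniformly in $t$; here $\delta$ is fixed once, depending only on the uniform data bound $\|g\|+\|f\|/\gamma$. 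Since $e^{\delta L}$ is unitary on $H^2$, writing $R(t):=\|N(t)\|_{H^2}$ gives the recursion $R(t)\le e^{-\gamma\delta}R(t-\delta)+C_1$.

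Iterating down to the first interval, on which $R\lesssim\|g\|^2$ directly from the local estimate, the strictly contractive factor $e^{-\gamma\delta}<1$ lets me sum the contributions into a convergent geometric series, producing $R(t)\le C\|g\|^2+C_1/(1-e^{-\gamma\delta})$, a bound depending only on $\alpha,\beta,\gamma,\|f\|,\|g\|$ and independent of $t$. Combined with the bound on $N_f$, this is \eqref{j}. The continuity $w\in C_t^0H_x^2$ follows since each Duhamel piece lands in $C_t^0H_x^2$ by the smoothing estimate while $e^{-\gamma\delta}e^{\delta L}$ acts strongly continuously on $H^2$, the pieces patching by uniqueness. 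The main obstacle is exactly this globalization: two-derivative smoothing is intrinsically a short-time statement, and only the dissipation---through the contractive factor $e^{-\gamma\delta}$---keeps the accumulated Duhamel contributions from growing with $t$; in the Hamiltonian case $\gamma=0$ the same estimates would yield only linear-in-$t$ growth, consistent with the absence of an attractor there.
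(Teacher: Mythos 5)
Your outer architecture is sound and in fact coincides with the paper's: the forcing piece $N_f$ is bounded in $H^2$ exactly as in the paper (the multiplier $\gamma+i(\alpha k^5-\beta k^3)$ has modulus $\gtrsim_{\alpha,\beta,\gamma}\langle k\rangle^5$ on mean-zero modes), and your globalization---restarting the Duhamel integral on intervals of a fixed length $\delta$ determined by the uniform energy bound $\|g\|+\|f\|/\gamma$ from \eqref{absorb}, using the local bound \eqref{bd} on each interval, and summing the geometric series generated by the contraction factor $e^{-\gamma\delta}$---is precisely the paper's final step (there phrased as iterating $u(j\delta)-e^{j\delta(L-\gamma)}u((j-1)\delta)$). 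But the proof has a genuine gap at its core: you \emph{invoke} a local smoothing estimate asserting that the Duhamel operator applied to $\partial_x(u^2)$ gains two derivatives, with bound $\|u\|_{X_\delta^{0,\frac12}}^2$, citing ``Section \ref{sec4}''---which is circular, since Section \ref{sec4} \emph{is} the proof of the proposition you are proving, and no such black-box lemma exists in the paper or elsewhere. Worse, the estimate as you state it is false for general $X_\delta^{0,\frac12}$ functions: take (time-localized) free waves at frequencies $k_1=N$, $k_2=1$. The output at $k=N+1$ carries modulation $|\tau+\alpha k^5-\beta k^3|\sim N^4$ by \eqref{powerbound}, so the putative $X^{2,-\frac12}$-norm of $\partial_x(u_1u_2)$ is of size $\langle k\rangle^2\cdot|k|\cdot N^{-2}\sim N$, which diverges; the high-low interaction destroys any direct two-derivative bilinear gain. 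This is consistent with the paper's bilinear estimate \eqref{w3}, which has the \emph{same} regularity $s$ on both sides and gains nothing.

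The missing idea is that the two-derivative gain holds only for \emph{solutions} of \eqref{fwdk}, and extracting it requires using the equation itself: the paper substitutes $u_k(t)=w_k(t)e^{-\gamma t-i(\alpha k^5-\beta k^3)t}$, performs differentiation by parts in $t$ against the oscillation $e^{ik_1k_2k\{5\alpha(k_1^2+k_2^2+k_1k_2)-3\beta\}t}$, and replaces $\partial_t w_{k_2}$ via the equation. This converts the quadratic Duhamel term into a boundary bilinear term $\mathcal B(u,u)$ whose large denominator $k_1k_2\{5\alpha(k_1^2+k_2^2+k_1k_2)-3\beta\}$ supplies three derivatives (first estimate in \eqref{prop1})---exactly killing the high-low interaction above---plus cubic terms, split into resonant pieces $\rho,\sigma$ (bounded elementarily, with $\sigma$ forcing the restriction $s\le2$, see \eqref{prop2}) and a nonresonant trilinear piece $\mathcal R(u)$, which is controlled in $X_\delta^{s,-\frac12+\epsilon}$ by \eqref{prop3}. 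That trilinear estimate is itself nontrivial: it rests on the factorization of the resonance function $\sum_i(m_i+\alpha k_i^5-\beta k_i^3)$ and on the $L^6$ Strichartz estimate of Lemma \ref{l6}, whose proof is a divisor-counting argument exploiting the nondegeneracy condition $\frac{3\beta}{5\alpha}\ne k_1^2+k_2^2+k_1k_2$. None of this machinery---the normal form, the resonant/nonresonant decomposition, the trilinear bound, the $L^6$ estimate---appears in your proposal, and it constitutes the entire analytic content of the proposition; your argument supplies only the (correct) outer reduction and iteration scheme around it.
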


Now we are ready to prove Theorem \ref{ga}.
The existence of an absorbing set follows from \eqref{ab} immediately.
To apply Theorem \ref{thmA}, we next show the asymptotic compactness of the propagator $U(t)$; for every bounded sequence $g_j \in L^2$ and every sequence $t_j \rightarrow \infty$, we must show that $\{U(t_j)g_{j}\}_j$ is precompact in $L^2$.
By decomposing the solution into linear and nonlinear parts, and by Proposition \ref{thm1}, 
we first see that 
$$U(t_j)g_j=e^{(-\gamma+L)t_j}g_j+N(t_j)g_j$$ 
where $N(t_j)g_j$ is in a ball in $H^2$ with a radius depending on $\|f\|$, $\|g\|$ and $\gamma$.
By Rellich's theorem\footnote{The inclusion $H^{s'}(\Omega)\hookrightarrow H^s(\Omega)$ is compact for $s<s'$ where $\Omega$ is a bounded domain in $\mathbb{R}^d$. Namely, any bounded set in $H^{s'}$ is precompact in $H^s$.}, $\{N(t_j)g_{j}\}_j$ is then precompact in $L^2$.
For the linear part, note that
$$\|e^{(-\gamma+L)t_j}g_j\|=e^{-\gamma t_j}\|g_j\| \lesssim e^{-\gamma t_j}$$
uniformly in $j$ since $\{g_j\}_j$ is assumed to be bounded in $L^2$.
Thus the linear part diminishes to zero in $L^2$ as $t_j\to\infty$.
Therefore, we conclude that both the linear and nonlinear parts are asymptotically compact in $L^2$, which yields that $\omega(\mathcal B_0)$ is a global attractor in $L^2$ as desired.

Finally, we show that $\omega(\mathcal B_0)$ is a compact subset of $H^s$ for any $s\in(0,2)$. 
For this, we note that 
\begin{equation}\label{co}
\|u(t)-e^{-\gamma(t-T)}e^{L(t-T)}u(T)\|_{H^2}\le C(\|f\|, \gamma)
\end{equation}
for $t\ge T= T(\|f\|,\|g\|,\gamma)$.
This easily follows from \eqref{j} and \eqref{ab}.
Let $B$ be the closed ball centred at zero in $H^2$ of radius $C(\gamma,\|f\|)$ given in \eqref{co}.
Then $B$ is compact in $H^s$, $s\in(0,2)$ by Rellich's theorem.
Since $$\omega(\mathcal B_0)=\bigcap_{\tau\ge0}\overline{\bigcup_{t\ge\tau}U(t)\mathcal B_0}$$
is obviously closed and any closed subset of a compact set is naturally compact, all we have to do is to show $\omega(\mathcal{B}_0) \subset B$.
By \eqref{co}, 
$$U_\tau:=\overline{\bigcup_{t\ge\tau}U(t)\mathcal B_0}\quad \text{for}\quad \tau>T $$
is contained in a $\delta_\tau$-neighbourhood $N_\tau$ of $B$ in $L^2$, where $\delta_\tau\rightarrow0$ as $\tau\rightarrow\infty$. Therefore, 
$$\omega(\mathcal B_0)=\bigcap_{\tau\ge0}U_\tau\subset\bigcap_{\tau\ge0}N_\tau=B.$$

\section{Smoothing property: proof of Proposition \ref{thm1}.}\label{sec4}
Now we prove Proposition \ref{thm1}.
We first write the equation \eqref{fwdk} on the Fourier transform side as in \eqref{fwdk3} and substitute $u_k(t)=w_k(t)e^{-\gamma t-i(\alpha k^5-\beta k^3)t}$.
We then use differentiation by parts to drag the phase down from the oscillation arising in this process. 
This makes it possible to take advantage of decay from the resulting large denominators. 
On the other hand, the order of the nonlinearity is increased from quadratic to cubic,
but we decompose the related trilinear term into two parts, resonant and nonresonant terms, and use Bourgain's restricted norm method to control the delicate nonresonant ones.

\subsection{Representation of Fourier coefficients of the solution}\label{sub4-1}
The following lemma gives an expression of the solution to \eqref{fwdk} on the Fourier transform side.
\begin{lem}
The solution $u$ to \eqref{fwdk} is represented in the following form:
	\begin{align}\label{morphback}
\nonumber
u_k(t)&=e^{-\gamma t-i(\alpha k^5-\beta k^3)t}g_k+\mathcal B(u,u)_k(t)-e^{-\gamma t-i(\alpha k^5-\beta k^3)t}\mathcal B(u,u)_k(0)\\
\nonumber
&\quad+\int_0^te^{-\gamma(t-r)-i(\alpha k^5-\beta k^3)(t-r)}(\gamma\mathcal B(u,u)_k(r)-2\mathcal B(u,e^{\gamma t - Lt}f)_k(r))dr\\
\nonumber
&\quad+\int_0^te^{-\gamma(t-r)-i(\alpha k^5-\beta k^3)(t-r)}(f_k + \rho(u)_k + \sigma(u)_k)
dr\\
&\quad+\int_0^te^{-\gamma(t-r)-i(\alpha k^5-\beta k^3)(t-r)}\mathcal R(u)_k(r)dr,
\end{align}
where for $k\ne0$
\begin{align*}
&\mathcal B(\phi,\psi)_k=-\frac12\sum_{k_1+k_2=k}\frac{\phi_{k_1}\psi_{k_2}}{k_1k_2\{5\alpha(k_1^2 +k_2^2 + k_1k_2)-3\beta\}},\\ &\rho(u)_k=\frac{i|u_k|^2u_k}{2(15\alpha k^2-3\beta)k}, \quad \sigma(u)_k=-iu_k\sum_{|j|\ne|k|}\frac{|u_j|^2}{j\{5\alpha(k^2-kj+j^2)-3\beta\}},\\
&\mathcal{R}(u)_k=-\frac i2\sum_{k_1+k_2+k_3=k\atop(k_1+k_2)(k_2+k_3)(k_3+k_1)\ne0}\frac{u_{k_1} u_{k_2} u_{k_3}} {k_1\{5\alpha(k_1^2 +(k_2+k_3)^2 + k_1(k_2+k_3))-3\beta\}},
\end{align*}
and $B(\phi,\psi)_0=\rho(u)_0=\sigma(u)_0=R(u)_0=0$. 
\end{lem}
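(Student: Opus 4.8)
The plan is to pass to the Fourier side, strip off the damped linear flow by an integrating-factor substitution, and then run a single normal-form (differentiation-by-parts) step on the quadratic nonlinearity, reading off each term in \eqref{morphback} as a boundary or a remainder contribution.

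First I would Fourier-transform \eqref{fwdk} in $x$. Writing $\omega_k := \alpha k^5 - \beta k^3$ and using that the $k$th coefficient of $\tfrac12\partial_x(u^2)$ is $\tfrac{ik}{2}\sum_{k_1+k_2=k}u_{k_1}u_{k_2}$, the equation becomes the ODE system
\[
\partial_t u_k = -(\gamma + i\omega_k)u_k - \frac{ik}{2}\sum_{k_1+k_2=k} u_{k_1}u_{k_2} + f_k, \qquad k\ne0,
\]
with $u_0\equiv0$ by the mean-zero property. Substituting $u_k(t)=w_k(t)e^{-\gamma t-i\omega_k t}$ cancels the linear part and yields
\[
\partial_t w_k = -\frac{ik}{2}\,e^{-\gamma t}\sum_{k_1+k_2=k} w_{k_1}w_{k_2}\,e^{-i\Omega t} + f_k\,e^{\gamma t+i\omega_k t},
\]
where $\Omega=\Omega(k_1,k_2):=\omega_{k_1}+\omega_{k_2}-\omega_k$. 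Using the identities $k_1^3+k_2^3-(k_1+k_2)^3=-3k_1k_2k$ and $k_1^5+k_2^5-(k_1+k_2)^5=-5k_1k_2k(k_1^2+k_1k_2+k_2^2)$, I would factor $\Omega=-k\,k_1k_2\{5\alpha(k_1^2+k_1k_2+k_2^2)-3\beta\}$. The hypothesis $\frac{3\beta}{5\alpha}\ne k_1^2+k_1k_2+k_2^2$ together with $k_1,k_2\ne0$ then guarantees $\Omega\ne0$ on the whole index set, so there are no quadratic resonances and the entire sum is amenable to integration by parts.

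Next I would integrate the $w_k$-equation over $[0,t]$ (noting $w_k(0)=g_k$) and integrate the nonresonant quadratic sum by parts in $r$, with $e^{-i\Omega r}$ as the oscillatory factor and $G(r):=e^{-\gamma r}w_{k_1}w_{k_2}$ as the amplitude. After multiplying back by $e^{-\gamma t-i\omega_k t}$ and using $\frac{k}{\Omega}=-\frac{1}{k_1k_2\{5\alpha(\cdots)-3\beta\}}$, the boundary terms collapse exactly to $\mathcal B(u,u)_k(t)$ at the upper endpoint and to $-e^{-\gamma t-i\omega_k t}\mathcal B(u,u)_k(0)$ at the lower one. In the remaining integral I would expand $\partial_r G$ by the product rule and substitute the $w_k$-equation itself: differentiating the $e^{-\gamma r}$ factor produces the $\gamma\mathcal B(u,u)_k(r)$ integrand; inserting the forcing $f_{k_j}e^{\gamma r+i\omega_{k_j}r}$ into each factor produces, after symmetrizing the two positions and recognizing that $e^{\gamma t-Lt}f$ has Fourier coefficients $e^{\gamma t+i\omega_k t}f_k$, the term $-2\mathcal B(u,e^{\gamma t-Lt}f)_k(r)$; and the direct forcing $f_k e^{\gamma r+i\omega_k r}$ integrates to the $f_k$-integral. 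A bookkeeping check shows every surviving phase reduces to $e^{-\gamma(t-r)-i\omega_k(t-r)}$, matching the kernels in \eqref{morphback}.

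Finally, inserting the quadratic nonlinearity into each factor of $\partial_r(w_{k_1}w_{k_2})$ produces a triple sum; relabeling its three frequencies as $k_1,k_2,k_3$ with $k_1+k_2+k_3=k$, I would split it according to whether $(k_1+k_2)(k_2+k_3)(k_3+k_1)$ vanishes. The nonresonant part, where all pairwise sums are nonzero, is precisely $\mathcal R(u)_k$. In the resonant part one of the three indices equals $k$, so two indices cancel and the corresponding factor becomes $|u_j|^2$; the off-diagonal piece $|j|\ne|k|$ assembles into $\sigma(u)_k$ and the diagonal piece $|j|=|k|$, whose denominator specializes to $(15\alpha k^2-3\beta)k$, into $\rho(u)_k$. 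The main obstacle is exactly this last decomposition: correctly enumerating the resonant configurations, tracking the combinatorial factors coming from the two symmetric ways the quadratic term can be inserted, and verifying that the denominators simplify to the stated forms $5\alpha(k^2-kj+j^2)-3\beta$ and $15\alpha k^2-3\beta$. Collecting the boundary, forcing, and cubic contributions then yields \eqref{morphback}.
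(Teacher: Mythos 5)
Your proposal is correct and follows essentially the same route as the paper's own proof: Fourier coefficients, the substitution $u_k=w_k e^{-\gamma t-i(\alpha k^5-\beta k^3)t}$, a single differentiation-by-parts step whose boundary terms give the $\mathcal B(u,u)$ contributions and whose remainder (after reinserting the equation) gives the $\gamma\mathcal B$, forcing, and cubic terms, the latter split into resonant pieces $\rho,\sigma$ and the nonresonant $\mathcal R$. The combinatorial subtlety you flag at the end is resolved in the paper by noting $k_2+k_3\ne 0$ is automatic and writing the vanishing-phase set as the disjoint union of $S_1=\{(-k,k,k)\}$, $S_2$, and $S_3$, so the doubly resonant configuration is counted once (yielding $\rho$) while $S_2$ and $S_3$ contribute equally to give $\sigma$ with coefficient $-i$.
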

\begin{proof}
Applying the Fourier transform to the equation \eqref{fwdk} implies
\begin{equation}\label{fwdk3}
	\begin{cases}
		\partial_tu_k=-i\alpha k^5u_k+ i\beta k^3u_k-\gamma u_k -\frac{ik}2\sum_{k_1+k_2=k}u_{k_1}u_{k_2}+f_k, \\
		u_k(0) = g_k.
\end{cases}
\end{equation}
Since $u$ and $f$ satisfy the mean-zero assumption, there are no zero harmonics in this equation. Using the transformation
$u_k(t)=:w_k(t)e^{-\gamma t-i(\alpha k^5-\beta k^3)t}$
and the identities
\begin{align*}
	(k_1+k_2)^3-(k_1^3+k_2^3)&=3k_1k_2(k_1+k_2),\\
	(k_1+k_2)^5-(k_1^5+k_2^5)&=5k_1k_2(k_1+k_2)(k_1^2+k_2^2+k_1k_2),
\end{align*}
the equation \eqref{fwdk3} can be written in the form
\begin{equation}\label{morph}
	\partial_tw_k=-\frac{ik}2e^{-\gamma t}\sum_{k_1+k_2=k}e^{i\{5\alpha(k_1^2 + k_2^2 + k_1 k_2 )-3\beta\} k_1k_2kt}w_{k_1}w_{k_2} + e^{\gamma t+i(\alpha k^5-\beta k^3)t} f_k.
\end{equation}

Using differentiation by parts with $e^{at}= \partial_t (\frac1a e^{at})$, we then rewrite \eqref{morph} as
\begin{align}\label{morph1}
\nonumber
\partial_tw_k
	&=-\partial_t\bigg(e^{-\gamma t}\sum_{k_1+k_2=k}\frac{e^{ik_1k_2k\{5\alpha(k_1^2 + k_2^2 + k_1 k_2 )-3\beta\}t}w_{k_1}w_{k_2}}{2k_1k_2\{5\alpha(k_1^2 + k_2^2 + k_1 k_2 )-3\beta\}}\bigg)\\
	\nonumber
	&-\gamma e^{-\gamma t}\sum_{k_1+k_2=k}\frac{e^{ik_1k_2k\{5\alpha(k_1^2 + k_2^2 + k_1 k_2 )-3\beta\}t}w_{k_1}w_{k_2}}{2k_1k_2\{5\alpha(k_1^2 + k_2^2 + k_1 k_2 )-3\beta\}}\\
	&+e^{-\gamma t}\sum_{k_1+k_2=k}\frac{e^{ik_1k_2k\{5\alpha(k_1^2 + k_2^2 + k_1 k_2 )-3\beta\}t}w_{k_1}\partial_t w_{k_2}}{k_1k_2\{5\alpha(k_1^2 + k_2^2 + k_1 k_2 )-3\beta\}}
	 + e^{\gamma t+i(\alpha k^5-\beta k^3)t} f_k.
\end{align}
Since $w_0=0$, $k_1$ and $k_2$ in the sums above are not zero.
Now, let us define $\widetilde{\mathcal{B}}$ by
$$ \widetilde{\mathcal B}(\phi,\psi)_k=-\frac12\sum_{k_1+k_2=k}\frac{e^{ik_1k_2k\{5\alpha(k_1^2 + k_2^2 + k_1 k_2)-3\beta\}t}\phi_{k_1}\psi_{k_2}}{k_1k_2\{5\alpha(k_1^2 + k_2^2 + k_1 k_2 )-3\beta\}}.$$
Then we rewrite the equation \eqref{morph1} as follows:
	\begin{equation}\label{morphing1}
		\begin{aligned}
			\partial_t(w-e^{-\gamma t}\widetilde{\mathcal{B}}(w,w))_k=&e^{\gamma t+i (\alpha k^5-\beta k^3)t} f_k+ \gamma e^{-\gamma t}\widetilde{\mathcal{B}}(w,w)_k\\
			&\,\,+e^{-\gamma t}\sum_{k_1+k_2=k}\frac{e^{ik_1k_2k\{5\alpha(k_1^2 + k_2^2 + k_1 k_2 )-3\beta\}t}w_{k_1}\partial_t w_{k_2}}{k_1k_2\{5\alpha(k_1^2 + k_2^2 + k_1 k_2 )-3\beta\}}.
		\end{aligned}
	\end{equation}

Applying \eqref{morph} to the sum in \eqref{morphing1}, we see
\begin{align}
\nonumber
&\sum_{k_1+k_2=k}\frac{e^{ik_1k_2k\{5\alpha(k_1^2 + k_2^2 + k_1 k_2 )-3\beta\}t}w_{k_1}\partial_t w_{k_2}}{k_1k_2\{5\alpha(k_1^2 + k_2^2 + k_1 k_2 )-3\beta\}}\\
\nonumber
&\quad\!=-2\widetilde{\mathcal{B}}(w,e^{\gamma t- Lt}f)_k\\
&\quad\!-ie^{-\gamma t}\sum_{k_1+k_2+k_3=k\atop k_2+k_3\ne0}\frac{e^{it\theta(k_1,k_2,k_3)(k_1+k_2)(k_2+k_3)(k_3+k_1)}}{2k_1\{5\alpha(k_1^2 + (k_2+k_3)^2 + k_1 (k_2+k_3) )-3\beta\}}w_{k_1}w_{k_2} w_{k_3},\label{morphing2}
\end{align}
where ${\theta} (k_1,k_2,k_3)=5\alpha(k_1^2 +k_2^2+k_3^2+k_1k_2+k_2k_3+k_3k_1)-3\beta$. 
Combining \eqref{morphing1} and \eqref{morphing2}, we have 
\begin{align}
\nonumber
&\partial_t(w-e^{-\gamma t}\widetilde{\mathcal{B}}(w,w))_k\\
	\nonumber
	&\!=e^{\gamma t+i(\alpha k^5-\beta k^3)t} f_k+ \gamma e^{-\gamma t}\widetilde{\mathcal{B}}(w,w)_k-2e^{-\gamma t}\widetilde{\mathcal{B}}(w,e^{\gamma t- Lt}f)_k\\
	\label{morphing3}
	&\!\quad-ie^{-2\gamma t}\!\sum_{k_1+k_2+k_3=k\atop k_2+k_3\ne0}\!\frac{e^{it \theta(k_1,k_2,k_3)(k_1+k_2)(k_2+k_3)(k_3+k_1)}}{2k_1\{5\alpha(k_1^2 + (k_2+k_3)^2 + k_1 (k_2+k_3) )-3\beta\}}w_{k_1}w_{k_2}w_{k_3}.
\end{align}
Since $\theta (k_1, k_2, k_3 )\ne 0$, the set on which the phase vanishes is the disjoint union of the following sets:
\begin{align*}
	S_1 &= \{k_1+k_2=0, k_3 + k_1=0, k_2 +k_3 \ne 0\}
	=\{k_1=-k, k_2 = k, k_3 = k \}, \\
	S_2&= \{k_1 +k_2 = 0,k_3+k_1 \ne 0,k_2+k_3 \ne 0\}
	= \{k_1=j, k_2 = -j, k_3 = k, |j|\ne|k| \},\\
	S_3 &= \{k_3+k_1=0, k_1 + k_2 \ne 0, k_2 + k_3 \ne 0\}
	= \{k_1=j, k_2 = k, k_3 = -j, |j|\ne|k| \}.
\end{align*}
Let us define $\widetilde {\mathcal R}$ by
$$\widetilde {\mathcal R}(u)_k=-\frac i2\sum_{k_1+k_2+k_3=k\atop(k_1+k_2)(k_2+k_3)(k_3+k_1)\ne0}\frac{e^{it \theta(k_1,k_2,k_3)(k_1+k_2)(k_2+k_3)(k_3+k_1)}u_{k_1}u_{k_2}u_{k_3}} {k_1\{5\alpha(k_1^2 + (k_2+k_3)^2 + k_1 (k_2+k_3) )-3\beta\}}.
$$
Then \eqref{morphing3} becomes
\begin{align}
\nonumber
&\partial_t(w-e^{-\gamma t}\widetilde {\mathcal B}(w,w))_k\\
\nonumber
&=e^{\gamma t+i(\alpha k^5-\beta k^3 )t}f_k+\gamma e^{-\gamma t}\widetilde {\mathcal B}(w,w)_k-2e^{-\gamma t}\widetilde {\mathcal B}(w,e^{\gamma t- Lt}f)_k \\
\nonumber
&\quad+e^{-2\gamma t}\widetilde {\mathcal R}(w)_k-\frac{ie^{-2\gamma t} }{2} \sum_{l=1}^3 \sum_{S_l} \frac{w_{k_1}w_{k_2}w_{k_3} }{k_1\{5\alpha(k_1^2 + (k_2+k_3)^2 + k_1 (k_2+k_3) )-3\beta\}}\\
\nonumber
&=e^{\gamma t+i(\alpha k^5-\beta k^3)t}f_k+\gamma e^{-\gamma t}\widetilde {\mathcal B}(w,w)_k -2e^{-\gamma t}\widetilde {\mathcal B}(w,e^{\gamma t- Lt}f)_k \\
\label{morphgoal}
&\quad+e^{-2\gamma t}\bigg(\widetilde {\mathcal R}(w)_k+\frac{i|w_k|^2w_k}{2(15\alpha k^2-3\beta)k}-iw_k\!\sum_{|j|\ne|k|}\!\frac{|w_j|^2}{j\{5\alpha(k^2-kj+j^2)-3\beta\}}\bigg).
\end{align}
Now, integrating \eqref{morphgoal} from 0 to $t$, we obtain
\begin{align*}
w_k(t)-w_k(0)&=e^{-\gamma t}\widetilde{\mathcal B}(w,w)_k(t)- \widetilde{\mathcal B}(w,w)_k(0)+\int_0^te^{\gamma r+i(\alpha k^5-\beta k^3)r}f_k dr\\
	&\quad +\int_0^t\big(\gamma e^{-\gamma r}\widetilde{\mathcal B}(w,w)_k(r)-2e^{-\gamma r}\widetilde{\mathcal B}(w,e^{\gamma r - Lr}f)_k(r)\big)dr\\
	&\quad +\int_0^t\big(e^{-2\gamma r}\widetilde{\mathcal R}(w)_k(r)+e^{-2\gamma r}\rho(w)_k(r)+e^{-2\gamma r}\sigma(w)_k(r)\big)dr.
\end{align*}
Transforming back to the $u$ variable, we have
\begin{align*}
	&u_k(t)-e^{-\gamma t-i(\alpha k^5-\beta k^3)t}g_k\\
	&\quad=\mathcal B(u,u)_k(t)-e^{-\gamma t-i(\alpha k^5-\beta k^3)t}\mathcal B(u,u)_k(0)\\
	&\qquad+\int_0^te^{-\gamma(t-r)-i(\alpha k^5-\beta k^3)(t-r)}(\gamma\mathcal B(u,u)_k(r)-2\mathcal B(u,e^{\gamma r -Lr}f)_k(r))dr\\
	&\qquad+\int_0^te^{-\gamma(t-r)-i(\alpha k^5-\beta k^3)(t-r)}(f_k+\rho(u)_k(r)+\sigma(u)_k(r)) dr\\
	&\qquad+\int_0^te^{-\gamma(t-r)-i(\alpha k^5-\beta k^3)(t-r)}\mathcal R(u)_k(r)dr
\end{align*}
as desired.
\end{proof}

\subsection{Estimates for $\mathcal{B}, \rho, \sigma$ and $\mathcal{R}$}
Now we obtain the following multilinear estimates, Proposition \ref{goodlem}, which will be used to finish the proof of the smoothing estimates (Proposition \ref{thm1}):
\begin{prop}\label{goodlem}
For $s\le3$, we have
\begin{equation}\label{prop1}
	\|\mathcal B(\phi,\psi)\|_{H^s}\lesssim_{\alpha,\beta}\|\phi\|\|\psi\| \quad {\and} \quad \|\rho(u)\|_{H^s}\lesssim_{\alpha,\beta}\|u\|^3.
\end{equation}
For $s\le2$, we have 
\begin{equation}\label{prop2}
\|\sigma(u)\|_{H^s}\lesssim_{\alpha,\beta}\|u\|^3
\end{equation}
and
\begin{equation}\label{prop3}
	\|\mathcal{R}(u)\|_{X_{\delta}^{s, -1/2+\epsilon}} \lesssim_{\alpha,\beta,\epsilon}\|u\|^3_{X_{\delta}^{0,1/2}}.
\end{equation}
\end{prop}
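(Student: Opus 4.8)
The plan is to derive all four bounds from the smoothing stored in the denominators, treating \eqref{prop1} and \eqref{prop2} by pointwise estimates on Fourier coefficients and \eqref{prop3}, the genuinely trilinear bound, by Bourgain's restricted norm method. The engine is the following consequence of the non-resonance hypothesis $\frac{3\beta}{5\alpha}\ne k_1^2+k_2^2+k_1k_2$ for $k_1,k_2\in\mathbb Z\setminus\{0\}$: the integer $n=k_1^2+k_2^2+k_1k_2\ge1$ never equals $3\beta/5\alpha$, so $5\alpha n-3\beta$ stays bounded away from $0$ while growing quadratically, giving $|5\alpha(k_1^2+k_2^2+k_1k_2)-3\beta|\sim_{\alpha,\beta}k_1^2+k_2^2\sim_{\alpha,\beta}\max(|k_1|,|k_2|)^2$. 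I would first record this, together with $|k|\le|k_1|+|k_2|$ and $\max(|k_1|,|k_2|)\gtrsim|k|$, and note that the same reasoning forces $15\alpha k^2-3\beta\ne0$ (take $k_1=k_2=k$) and $5\alpha(k^2-kj+j^2)-3\beta\ne0$ (take $k_1=k$, $k_2=-j$), with the analogous quadratic lower bounds.

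For the first bound in \eqref{prop1}, the pointwise estimate $|\mathcal B(\phi,\psi)_k|\lesssim_{\alpha,\beta}\sum_{k_1+k_2=k}|\phi_{k_1}||\psi_{k_2}|/(|k_1||k_2|(k_1^2+k_2^2))$ combined with $\langle k\rangle^s\lesssim\max(\langle k_1\rangle,\langle k_2\rangle)^s$ reduces, for $s\le3$ and in the region $|k_1|\ge|k_2|$, the kernel to $\lesssim|\phi_{k_1}||\psi_{k_2}|/|k_2|$; Young's inequality and $\||k|^{-1}\|_{\ell^2}<\infty$ then give $\|\mathcal B(\phi,\psi)\|_{H^s}\lesssim\|\phi\|\|\psi\|$, the mirror region being symmetric. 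For the second bound in \eqref{prop1}, the denominator $\sim_{\alpha,\beta}|k|^3$ yields $\langle k\rangle^s|\rho(u)_k|\lesssim_{\alpha,\beta}|u_k|^3$ for $s\le3$, so $\|\rho(u)\|_{H^s}^2\lesssim\sum_k|u_k|^6\le\|u\|^6$ by $\ell^2\hookrightarrow\ell^6$. For \eqref{prop2}, bounding the inner denominator by $|j\{5\alpha(k^2-kj+j^2)-3\beta\}|\gtrsim_{\alpha,\beta}|j|k^2$ gives $|\sigma(u)_k|\lesssim_{\alpha,\beta}(|u_k|/k^2)\sum_j|u_j|^2/|j|\le(|u_k|/k^2)\|u\|^2$, hence $\langle k\rangle^s|\sigma(u)_k|\lesssim|u_k|\|u\|^2$ for $s\le2$ and $\|\sigma(u)\|_{H^s}\lesssim\|u\|^3$. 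The drop from $s\le3$ to $s\le2$ here is exactly the cost of spending one power of $|j|$ on the inner sum.

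The estimate \eqref{prop3} is the main obstacle and is where the restricted norm method is needed. Since the multiplier $1/(k_1\{5\alpha(k_1^2+(k_2+k_3)^2+k_1(k_2+k_3))-3\beta\})$ is independent of $\tau$, the space-time Fourier transform $\widehat{\mathcal R(u)}(k,\tau)$ equals this multiplier applied to the $\tau$-convolution of the $\hat u(k_i,\tau_i)$ over $k_1+k_2+k_3=k$. Writing $\sigma=\tau+\alpha k^5-\beta k^3$ and $\sigma_i=\tau_i+\alpha k_i^5-\beta k_i^3$, the relation $\tau=\tau_1+\tau_2+\tau_3$ gives the resonance identity $\sigma-\sigma_1-\sigma_2-\sigma_3=h$, where $h=\theta(k_1,k_2,k_3)(k_1+k_2)(k_2+k_3)(k_3+k_1)$ is precisely the phase carried by $\widetilde{\mathcal R}$. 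On the support $(k_1+k_2)(k_2+k_3)(k_3+k_1)\ne0$ this yields $\max(\langle\sigma\rangle,\langle\sigma_1\rangle,\langle\sigma_2\rangle,\langle\sigma_3\rangle)\gtrsim\langle h\rangle\gtrsim_{\alpha,\beta}|(k_1+k_2)(k_2+k_3)(k_3+k_1)|\max_i k_i^2\gtrsim\langle k\rangle^2$.

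To close \eqref{prop3} I would set $F_i=\langle\sigma_i\rangle^{1/2}\hat u(k_i,\cdot)$, so that $\|F_i\|_{\ell^2L^2}=\|u\|_{X^{0,1/2}}$, and estimate the $\ell^2_kL^2_\tau$ norm of $\langle k\rangle^s\langle\sigma\rangle^{-1/2+\epsilon}\widehat{\mathcal R(u)}(k,\tau)$ directly. First note that the denominator alone satisfies $|k_1\{5\alpha(k_1^2+(k_2+k_3)^2+k_1(k_2+k_3))-3\beta\}|\gtrsim_{\alpha,\beta}\langle k\rangle^2$, since $\max(|k_1|,|k_2+k_3|)\gtrsim|k|$ and $|k_1|\ge1$; this absorbs $\langle k\rangle^s$ for $s\le2$, leaving the summability to be extracted from the modulation. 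I would then run the standard case analysis according to which of $\langle\sigma\rangle,\langle\sigma_i\rangle$ realizes the maximum, in each case applying Cauchy-Schwarz in the internal variables $(k_1,k_2,\tau_1,\tau_2)$ and reducing to a supremum bound $\sup_{k,\tau}\langle k\rangle^{2s}\langle\sigma\rangle^{-1+2\epsilon}\sum_{k_1,k_2}\int d\tau_1d\tau_2/(|k_1\{\ldots\}|^2\langle\sigma_1\rangle\langle\sigma_2\rangle\langle\sigma_3\rangle)$, evaluating the $\tau$-integrals by the calculus lemma $\int\langle\tau_1-a\rangle^{-1}\langle\tau_1-b\rangle^{-1}d\tau_1\lesssim\langle a-b\rangle^{-1}\log\langle a-b\rangle$ and summing in $k_1,k_2$. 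The hard part is arranging this so that the factor $|(k_1+k_2)(k_2+k_3)(k_3+k_1)|$ inside $\langle h\rangle$, which can be as small as $1$, still furnishes enough decay for the frequency sums to converge; the role of $\epsilon>0$ is to render the borderline $\tau$-integrals and $k$-sums summable, and carrying the $\alpha,\beta$ dependence through the lower bounds is what produces the stated implied constants.
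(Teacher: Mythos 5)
Your treatment of \eqref{prop1} and \eqref{prop2} is correct and essentially coincides with the paper's: the same quadratic lower bound $|5\alpha(k_1^2+k_2^2+k_1k_2)-3\beta|\gtrsim_{\alpha,\beta}\max(k_1^2,k_2^2)\gtrsim k^2$ (valid under the standing non-resonance hypothesis), the same restriction to $|k_1|\ge|k_2|$ with Young's inequality and $\||k|^{-1}\|_{\ell^2}<\infty$ for $\mathcal B$, the embedding $\ell^2\hookrightarrow\ell^6$ for $\rho$, and the supremum bound on $|k|^s/|j\{5\alpha(k^2-kj+j^2)-3\beta\}|$ for $\sigma$. Your observation that $s\le 2$ in \eqref{prop2} is the cost of spending one power of $|j|$ on the inner sum is exactly right.

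For \eqref{prop3}, however, your proposal has a genuine gap, and it sits precisely at the point you flag as ``the hard part.'' Your plan is the direct Cauchy--Schwarz/supremum reduction: after absorbing $\langle k\rangle^s$ ($s\le2$) into the multiplier (which is fine, since $|k_1\{5\alpha(k_1^2+(k_2+k_3)^2+k_1(k_2+k_3))-3\beta\}|\gtrsim\langle k\rangle^2$), and after performing the $\tau$-integrals via the calculus lemma, each case of your analysis reduces to a quantity of the schematic form
\begin{equation*}
\sup_{k,\tau}\,\langle\tau+\alpha k^5-\beta k^3\rangle^{-1+2\epsilon}\sum_{k_1,k_2}\langle\tau+\alpha k^5-\beta k^3-h(k_1,k_2,k)\rangle^{-1+},
\end{equation*}
where $h=\theta(k_1,k_2,k_3)(k_1+k_2)(k_2+k_3)(k_3+k_1)$. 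Convergence of this frequency sum requires a counting bound on the level sets of the quintic $h$: a priori, the number of $(k_1,k_2)$ with $h$ in a unit window could be large, and without a bound on it the sum diverges (already $\sum_q \langle\sigma-q\rangle^{-1+}$ diverges if each value $q$ of $h$ can be attained by unboundedly many $(k_1,k_2)$). The needed input is number-theoretic: since $h$ factors, the divisor bound shows each integer value of $h$ has $O(\langle h\rangle^{\epsilon})$ representations. This is exactly the ingredient the paper supplies, but packaged differently: the paper proves \eqref{prop3} by duality, redistributes the modulation weights via \eqref{c}--\eqref{dualwork1} to extract the factors $(|k_1+k_2||k_2+k_3||k_3+k_1|)^{5/6-}$ (hence $|k_1k_2k_3k_4|^{-\epsilon}$), and then invokes the $L^6$ Strichartz estimate of Lemma \ref{l6}, whose proof is where the divisor-counting argument ($|A_{p,q}|\lesssim K^\epsilon$) lives; the $\epsilon$-derivative loss in $L^6$ is compensated precisely by those extracted negative powers. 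Your route is not hopeless --- with the divisor bound inserted, the supremum sums can plausibly be made to close in each case of the modulation analysis --- but as written you have only set up the reduction and named the obstruction, not resolved it, so the decisive step of \eqref{prop3} is missing. You should either import the divisor-counting estimate into your supremum bounds case by case, or follow the paper's duality-plus-$L^6$ scheme, where that counting is done once and for all.
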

\subsubsection{Proofs of \eqref{prop1} and \eqref{prop2}}
By symmetry, one can restrict the sum in $\mathcal B$ to $|k_1|\ge|k_2|$, and since $$5\alpha (k_1^2+k_2^2+k_1k_2)-3\beta\sim_{\alpha,\beta}\left(k_2 + \frac{k_1}{2}\right)^2 + \frac{3k_1^2}{4}\ge k_1^2\gtrsim k^2,$$ we have
\begin{align*}
	\|\mathcal B(\phi,\psi)\|_{H^s}&\le\left\||k|^s\sum_{k_1+k_2=k\atop|k_1|\ge|k_2|}
	\frac{|\phi_{k_1}\psi_{k_2}|}{|5\alpha(k_1^2+k_2^2+k_1k_2)-3\beta||k_1k_2|}\right\|_{l_k^2}\\
	&\lesssim_{\alpha,\beta}\left\| \sum_{k_1+k_2=k\atop|k_1|\ge|k_2|}
	\frac{|\phi_{k_1}||\psi_{k_2}||k|^{s-3}}{|k_2|}\right\|_{l_k^2}.
\end{align*}
For $s \le 3$, by Young's inequality and the Cauchy-Schwarz inequality the right side is bounded as
\begin{equation*}
	\left\|\sum_{k_1+k_2=k\atop|k_1|\ge|k_2|}\frac{|\phi_{k_1}||\psi_{k_2}|}{|k_2|}\right\|_{l_k^2}
	\le\|\phi_k\|_{l_k^2}\bigg\|\frac{\psi_k}{|k|}\bigg\|_{l_k^1}\le\|\phi\|\big\||k|^{-1}\big\|_{l_k^2}\|\psi\|\lesssim\|\phi\|\|\psi\|
\end{equation*}
which implies the desired estimate. 
Again for $s\le 3$, using the inclusion property for $l^p$-spaces we obtain the second inequality in \eqref{prop1}:
$$\|\rho(u)\|_{H^s}= \bigg\||k|^s\frac{i|u_k|^2 u_k}{2(15\alpha k^2-3\beta)k}\bigg\|_{l_k^2}\lesssim_{\alpha,\beta}\||u_k|^3\|_{l_k^2}=\||u_k|\|^3_{l_k^6} \le\|u\|^3.$$

To show \eqref{prop2}, we note that the left side of \eqref{prop2} is bounded as
\begin{align*}
\|\sigma(u)\|_{H^s}&=\bigg\||k|^su_k\sum_{j\ne\pm k\atop j\ne0}\frac{u_{j}u_{-j}}{j(5\alpha(k^2-kj+j^2)-3\beta)}\bigg\|_{l_k^2} \\
&\le \sup_{k,j\ne0}\frac{|k|^s}{|j(5\alpha(k^2-kj+j^2)-3\beta)|}\|u\|\sum_{j\ne\pm k\atop j\ne0}|u_j||u_{-j}| \\
&\le \sup_{k,j\ne0}\frac{|k|^s}{|j(5\alpha(k^2-kj+j^2)-3\beta)|}\|u\|^3.
\end{align*}
Here, we used the Cauchy-Schwarz inequality for the last inequality. Note that for $s\le2$, 
$$\sup_{k,j\ne0}\frac{|k|^s}{|j\{5\alpha(k^2-kj+j^2)-3\beta\}|}<\infty$$
since $k^2+j^2-kj=\frac12(k^2+j^2+(k-j)^2)>\frac12k^2$. This completes the proof.

\subsubsection{Proof of \eqref{prop3}}
It suffices to show the statement with a local-in-time function $u$. 
By duality, we may prove 
\begin{align}\label{dualgoal}
\nonumber
\bigg|\int_{\mathbb T^2}\mathcal R(u)(x,t)&h(x,t)dxdt\bigg|\\
&=\bigg|\sum_{k,m}\widehat{\mathcal R}(k,m)\widehat h(-k,-m)\bigg|
\lesssim_{\alpha,\beta,\epsilon}\|u\|_{X^{0,\frac{1}{2}}}^3\|h\|_{X^{-s,\frac12-\epsilon}}.
\end{align}

First, we note that
\begin{align*}&\widehat{\mathcal R}(k,m)\\&=\sum_{k_1+k_2+k_3=k\atop(k_1+k_2)(k_2+k_3)(k_3+k_1)\ne0}\sum_{m_1+m_2+m_3=m}\frac{-i\widehat u(k_1,m_1)\widehat u(k_2,m_2)\widehat u(k_3,m_3)}{2k_1\{5\alpha(k_1^2 +(k_2+k_3)^2 + k_1(k_2+k_3))-3\beta\}}.
\end{align*}
We then let
\begin{align*}
	&\Phi:=\{(k_1,k_2,k_3,k_4)\in \mathbb{Z}^4:k_1+k_2+k_3+k_4=0, (k_1+k_2)(k_2+k_3)(k_3+k_1)\ne0\},\\
	&\Omega:=\{(m_1,m_2,m_3,m_4)\in\mathbb{Z}^4:m_1+m_2+m_3+m_4=0\},
\end{align*}
and set
\begin{align*}
f_i (k_i,m_i) &:=|\widehat u(k_i,m_i)|\langle m_i+\alpha k_i^5-\beta k_i^3\rangle^{\frac{1}{2}},\quad i=1,2,3,\\ f_4(k,m)&:=|\widehat h(k_4,m_4)||k_4|^{-s}\langle m_4+\alpha k_4^5-\beta k_4^3\rangle^{\frac12-\epsilon},\\
	\langle m_n+\alpha k_n^5-\beta k_n^3\rangle&=\max_{i=1,2,3,4}\langle m_i+\alpha k_i^5-\beta k_i^3\rangle,\\ S&:=\{1,2,3,4\}\backslash\{n\}.
	\end{align*}
Then \eqref{dualgoal} follows from
\begin{align}\label{a}
	\nonumber
	\sum_{\Phi,\Omega}&\frac{|k_4|^s\prod_{i=1}^4f_i(k_i,m_i)}{|k_1||5\alpha(k_1^2 +(k_2+k_3)^2 + k_1(k_2+k_3))-3\beta|\prod_{i=1}^4\langle m_i+\alpha k_i^5-\beta k_i^3\rangle^{\frac12-\epsilon}}\\
	&\qquad \qquad \qquad \qquad \qquad \qquad \qquad \qquad \qquad\qquad\qquad\quad \qquad\lesssim_{\alpha,\beta,\epsilon}\prod_{i=1}^4\|f_i\|.
	\end{align}

Since
\begin{align*}
	|5\alpha(k_1^2 +(k_2+k_3)^2 + k_1(k_2+k_3))-3\beta|&\sim_{\alpha,\beta}\left(k_1 + \frac{k_2+k_3}2\right)^2 + \frac{3(k_2+k_3)^2}4\\
	&=\frac{|k_1-k_4|^2}{4} + \frac{3|k_1+k_4|^2}{4}\\
	&\gtrsim |k_1-k_4||k_1+k_4| \gtrsim |k_4|,
\end{align*}
the left side of \eqref{a} is bounded by 
\begin{equation}\label{dualgoal2}
 \sum_{\Phi,\Omega}\frac{|k_4|^{s-1}\prod_{i=1}^4f_i(k_i,m_i)}{|k_1|\prod_{i=1}^4\langle m_i+\alpha k_i^5-\beta k_i^3\rangle^{\frac12-\epsilon}}\lesssim_{\alpha,\beta}\sum_{\Phi,\Omega}\frac{|k_1k_2k_3k_4|^{-\epsilon}\prod_{i=1}^4f_i(k_i,m_i)}{\prod_{i\in S}\langle m_i+\alpha k_i^5-\beta k_i^3\rangle^{\frac12+\epsilon}};
\end{equation}
here, we used the inequality
\begin{equation}\label{c}
	\frac{|k_4|^{s-1}}{|k_1|\prod_{i=1}^4\langle m_i+\alpha k_i^5-\beta k_i^3\rangle^{\frac12-\epsilon}} \lesssim_{\alpha,\beta}\frac{|k_1 k_2 k_3 k_4|^{-\epsilon}}{\prod_{i\in S}\langle m_i+\alpha k_i^5-\beta k_i^3\rangle^{\frac12+\epsilon}}
\end{equation}
for $s\le2$ and $\epsilon>0$ sufficiently small,  which will be shown later.
By symmetry let us now assume $m=1$. 
We may eliminate $|k_1|^{-\epsilon}$ from the right side of \eqref{dualgoal2}.
We then use Lemma \ref{l6} below to confirm \eqref{a}. 
Indeed, by the convolution structure, using Plancherel's theorem and H\"older's inequality as well as Lemma \ref{l6} 
we bound the right side of  \eqref{dualgoal2} as	
\begin{align*}
	\sum_{\Phi,\Omega}&\frac{|k_2k_3k_4|^{-\epsilon}\prod_{i=1}^4f_i(m_i,k_i)}{\prod_{i\in S}\langle m_i+\alpha k_i^5-\beta k_i^3\rangle^{\frac12+\epsilon}}\\
	&\qquad\qquad \qquad \le \int_{\mathbb T^2}\left|\check{f_1}(x,t)\prod_{i=2}^4\bigg(\frac{f_i|k_i|^{-\epsilon}}{\langle m_i+\alpha k_i^5-\beta k_i^3\rangle^{\frac12+\epsilon}}\bigg)^\vee(x,t)\right|dxdt\\ &\qquad\qquad\qquad\le\|f_1\|_{L^2}\prod_{i=2}^4\bigg\|\bigg(\frac{f_i|k|^{-\epsilon}}{\langle m+\alpha k^5-\beta k^3\rangle^{\frac12+\epsilon}}\bigg)^\vee\bigg\|_{L^6}\lesssim_{\alpha,\beta,\epsilon}\prod_{i=1}^4\|f_i\|_{L^2},
\end{align*}
as desired.

\begin{lem}\label{l6}
	For any $\epsilon>0$ and $b>1/2$, we have
	\begin{equation*}
		\|\chi_{[0,\delta]}(t)u\|_{L_{t,x}^6(\mathbb R\times\mathbb T)}\lesssim_{\alpha,\beta,\epsilon,b}\|u\|_{X_\delta^{\epsilon,b}}.
\end{equation*}
\end{lem}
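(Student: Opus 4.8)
The plan is to reduce the restricted-norm estimate to a fixed-interval Strichartz estimate for the free Kawahara propagator $e^{itL}$, and to prove the latter by a Diophantine counting argument exploiting the genuine degree-five dispersion ($\alpha\ne0$).

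\emph{Removing the restriction and transferring to the free evolution.} For any extension $\tilde u$ with $\tilde u=u$ on $[0,\delta]$ one has $\chi_{[0,\delta]}u=\chi_{[0,\delta]}\tilde u$, so after taking the infimum over extensions it suffices to prove the global estimate $\|\chi_{[0,1]}v\|_{L^6(\mathbb R\times\mathbb T)}\lesssim\|v\|_{X^{\epsilon,b}}$ for arbitrary $v$ (using $\chi_{[0,\delta]}\le\chi_{[0,1]}$ since $\delta<1$). I would then invoke Bourgain's transference principle: writing
$$v(x,t)=\int_{\mathbb R}e^{it\lambda}\,e^{itL}w_\lambda(x)\,d\lambda,\qquad (w_\lambda)_k:=\widehat v(k,\lambda-\alpha k^5+\beta k^3),$$
one checks directly that $\|v\|_{X^{\epsilon,b}}^2=\int_{\mathbb R}\langle\lambda\rangle^{2b}\|w_\lambda\|_{H^\epsilon}^2\,d\lambda$. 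By Minkowski's inequality, $|e^{it\lambda}|=1$, the free-evolution estimate below, Cauchy--Schwarz, and $\int\langle\lambda\rangle^{-2b}\,d\lambda<\infty$ (which holds \emph{precisely because} $b>1/2$),
$$\|\chi_{[0,1]}v\|_{L^6}\le\int_{\mathbb R}\|e^{itL}w_\lambda\|_{L^6([0,1]\times\mathbb T)}\,d\lambda\lesssim_{\alpha,\beta,\epsilon}\int_{\mathbb R}\|w_\lambda\|_{H^\epsilon}\,d\lambda\lesssim_b\|v\|_{X^{\epsilon,b}}.$$
Thus everything reduces to the free Strichartz estimate $\|e^{itL}f\|_{L^6([0,1]\times\mathbb T)}\lesssim_{\alpha,\beta,\epsilon}\|f\|_{H^\epsilon}$.

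\emph{The free estimate.} Writing $f=\sum_k a_k e^{ikx}$ and $\phi(k)=\alpha k^5-\beta k^3$, so that $e^{itL}f=\sum_k a_k e^{i(kx-\phi(k)t)}$, I would compute $\|e^{itL}f\|_{L^6}^6=\|(e^{itL}f)^3\|_{L^2([0,1]\times\mathbb T)}^2$, expand the cube, and integrate in $x$ (imposing the momentum constraint $k_1+k_2+k_3=k_1'+k_2'+k_3'$) and in $t$ over $[0,1]$ (bounding the oscillatory integral by $\langle\Delta\Phi\rangle^{-1}$, where $\Delta\Phi=\sum_j\phi(k_j)-\sum_j\phi(k_j')$ is the resonance function). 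This converts the estimate into a counting problem: for frequencies localized at a dyadic scale $N$, one must control the number of near-solutions of $k_1+k_2+k_3=K$, $\phi(k_1)+\phi(k_2)+\phi(k_3)\in[\mu,\mu+1)$. The key structural input is the pairwise building block: with the pair-sum $s$ fixed, the fiber dispersion $k_1\mapsto\phi(k_1)+\phi(s-k_1)$ becomes, under the substitution $m=k_1-s/2$, an even quartic in $m$ with leading coefficient $\sim\alpha s$; since $\alpha\ne0$ this map is strongly non-degenerate, so its values spread apart and unit windows capture few lattice points. Summing this per-pair bound over the third frequency then feeds back into Schur's test / Cauchy--Schwarz so as to preserve the $\ell^2$ structure of $(a_k)$ while losing only an arbitrarily small power of $N$.

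\emph{Main obstacle.} I expect the decisive difficulty to lie in this counting step: establishing a divisor-type bound $\lesssim_\epsilon N^\epsilon$ for the multiplicity, \emph{uniformly} in the output data, and in particular handling the near-diagonal degeneracies (where the quartic fiber is nearly flat, e.g.\ $k_1\approx k_2$) where multiplicities accumulate. The degree-five dispersion together with the non-resonance hypothesis $\tfrac{3\beta}{5\alpha}\ne k_1^2+k_2^2+k_1k_2$, which rules out exact cancellations in $\Delta\Phi$, is exactly what should render this bound attainable with the stated $\epsilon$-loss; organizing the resulting multilinear sum so that the loss is absorbed into $H^\epsilon$ is the part requiring the most care.
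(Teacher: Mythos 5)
Your first half—the reduction to a fixed-interval Strichartz bound via the foliation $v=\int_{\mathbb R}e^{it\lambda}e^{itL}w_\lambda\,d\lambda$, Minkowski's inequality, and Cauchy--Schwarz in $\lambda$ using $b>1/2$—is exactly the paper's argument and is correct (the paper phrases the time integration as exact orthogonality on $\mathbb T^2$ rather than your $\langle\Delta\Phi\rangle^{-1}$ weight on $[0,1]$, but that difference is cosmetic). The genuine gap sits precisely where you yourself flag the ``main obstacle'': the multiplicity bound is never proved, and the mechanism you sketch cannot produce it. For a fixed output pair $(p,q)$ one must count all triples with $k_1+k_2+k_3=p$, $|k_i|\sim K$, and $\phi(k_1)+\phi(k_2)+\phi(k_3)$ in a unit window, where $\phi(k)=\alpha k^5-\beta k^3$. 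Your fiber argument gives, for each fixed pair-sum $s=k_1+k_2\ne0$, boundedly many $k_1$ per window; but there are $\sim K$ admissible values of $s$ (equivalently of $k_3=p-s$), so summing the per-fiber bound yields only $O(K)$, and no Schur-test or Cauchy--Schwarz bookkeeping afterwards upgrades $O(K)$ to the required $O_\epsilon(K^\epsilon)$—the loss has to be prevented at the counting stage. Moreover, your non-degeneracy claim (leading coefficient $\sim\alpha s$ of the even quartic in $m=k_1-s/2$) fails exactly at $s=0$, where $\phi(k_1)+\phi(-k_1)\equiv0$ makes the fiber map constant: for the corresponding $(p,q)$ the multiplicity genuinely is $\sim K$, so a uniform $K^\epsilon$ count is false on this diagonal, and that stratum must be split off and estimated directly by Cauchy--Schwarz on the $\ell^2$ data, as in Bourgain's KdV $L^6$ argument. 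The degeneracy you worry about is thus real, but it is $k_1+k_2=0$ (or a permutation), not merely a nearly flat quartic, and the hypothesis $\tfrac{3\beta}{5\alpha}\ne k_1^2+k_2^2+k_1k_2$ does not remove it.

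The idea your sketch is missing is arithmetic rather than analytic. Setting $Q:=k_1^2+k_2^2+k_3^2+k_1k_2+k_2k_3+k_3k_1$, one has the exact factorization
\begin{equation*}
\phi(k_1+k_2+k_3)-\phi(k_1)-\phi(k_2)-\phi(k_3)=(k_1+k_2)(k_2+k_3)(k_3+k_1)\,(5\alpha Q-3\beta),
\end{equation*}
so on the level set the integer $(k_1+k_2)(k_2+k_3)(k_3+k_1)$ equals a quantity determined by $(p,q)$ divided by $5\alpha Q-3\beta$. After multiplying through by a normalizing constant $C_{\alpha,\beta}$ (the paper treats the cases $\alpha/\beta$ irrational and rational separately), everything in sight is an integer, and the divisor bound $d(n)=o(n^\epsilon)$ pins the triple $(k_1+k_2,k_2+k_3,k_3+k_1)$—hence $(k_1,k_2,k_3)$—to $O_\epsilon(K^\epsilon)$ choices whenever the relevant integer is nonzero. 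This single identity is what converts your counting problem into number theory and is the engine of the paper's proof of the free estimate $\|e^{Lt}h\|_{L^6}\lesssim_{\alpha,\beta,\epsilon}\|h\|_{H^\epsilon}$, which then closes exactly along the Littlewood--Paley and Cauchy--Schwarz lines you set up. Note finally that the non-resonance hypothesis plays no role in this count (its job in the paper is to keep the normal-form denominators of Section \ref{sec4} nonzero); invoking it here does not rescue your fiber-dispersion heuristic. As written, your proposal establishes the transference step but leaves the lemma's essential content unproven.
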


We postpone the proof of this lemma to the next subsection.
Instead we first show \eqref{c}.
Using $m_1+m_2+m_3+m_4=0$ and $k_1+k_2+k_3+k_4=0$, we have
\begin{align*}
\sum_{i=1}^4&m_i+\alpha k_i^5-\beta k_i^3\\
&=-(k_1+k_2)(k_2+k_3)(k_3+k_1)\{5\alpha(k_1^2+k_2^2+k_3^2+k_1k_2+k_2k_3+k_3k_1)-3\beta\}.
\end{align*}
Thus,
\begin{align*}
\langle m_n&+\alpha k_n^5-\beta k_n^3\rangle \\ &\gtrsim|k_1+k_2||k_2+k_3||k_3+k_1||5\alpha(k_1^2+k_2^2+k_3^2+k_1k_2+k_2k_3+k_3k_1)-3\beta|.
	\end{align*}
By using this inequality, we then have
\begin{align*}
&\prod_{i=1}^4\langle m_i+\alpha k_i^5-\beta k_i^3\rangle^{\frac12-\epsilon}\\
&=\langle m_n+\alpha k_n^5-\beta k_n^3\rangle^{\frac12-\epsilon}\prod_{i\in S}\langle m_i+\alpha k_i^5-\beta k_i^3\rangle^{-2\epsilon}\langle m_i\alpha k_i^5-\beta k_i^3\rangle^{\frac12+\epsilon}\\
&\ge\langle m_n+\alpha k_n^5-\beta k_n^3\rangle^{\frac12-7\epsilon}\prod_{i\in S}\langle m_i+\alpha k_i^5-\beta k_i^3\rangle^{\frac12+\epsilon}\\
&\gtrsim\big(|k_1+k_2||k_2+k_3||k_3+k_1||5\alpha(k_1^2+k_2^2+k_3^2+k_1k_2+k_2k_3+k_3k_1)-3\beta|\big)^{\frac12-7\epsilon}\\&\qquad\qquad\qquad\qquad\qquad\qquad\qquad\qquad\qquad\qquad\qquad\qquad\qquad\prod_{i\in S}\langle m_i+\alpha k_i^5-\beta k_i^3\rangle^{\frac12+\epsilon}.
\end{align*}
Here, since
	\begin{align*}
	|5\alpha (k_1^2+k_2^2+k_3^2&+k_1k_2+k_2k_3+k_3k_1)-3\beta|\\
	&\qquad \qquad\qquad\gtrsim_{\alpha,\beta}|k_1+k_2|^2 + |k_2+k_3|^2 + |k_3+k_1|^2 \\
	&\qquad\qquad\qquad\gtrsim (|k_1+k_2||k_2+k_3||k_3+k_1|)^{2/3},
\end{align*}
we conclude
	\begin{align}
\nonumber\prod_{i=1}^4&\langle m_i+\alpha k_i^5-\beta k_i^3\rangle^{\frac12-\epsilon}\\
&\gtrsim_{\alpha,\beta}(|k_1+k_2||k_2+k_3||k_3+k_1|)^{\frac{5}{6}-\frac{35}{3}\epsilon}\prod_{i\in S}\langle m_i+\alpha k_i^5-\beta k_i^3\rangle^{\frac12+\epsilon}.\label{dualwork1}	
	\end{align}
The desired estimate \eqref{c} follows now by combining \eqref{dualwork1} with 
\begin{equation}\label{dualwork3}
	\frac{|k_4|^{s-1}}{|k_1|(|k_1+k_2||k_2+k_3||k_3+k_1|)^{\frac{5}{6}-\frac{35}{3}\epsilon}}\lesssim|k_1k_2k_3k_4|^{-\epsilon}.
\end{equation}

To show \eqref{dualwork3},
using $|k_1||k_1+k_2|\gtrsim|k_2|$ and $|k_1||k_1+k_3|||k_2+k_3|\gtrsim|k_2|$, and by symmetry of $k_2$, $k_3$, we first see
\begin{align}
	\nonumber
|k_1|(|k_1+k_2||k_2&+k_3||k_3+k_1|)^{\frac{5}{6}-\frac{47}{3} \epsilon}\\ 
	\nonumber
&=|k_1|^{\frac{1}{2}}|k_1+k_2|^{\frac{5}{6}-\frac{47}{3} \epsilon}|k_1|^{\frac12}|k_2+k_3|^{\frac{5}{6}-\frac{47}{3} \epsilon}|k_3+k_1|^{\frac{5}{6}-\frac{47}{3} \epsilon}\\
\label{d}
&\ge \max\{|k_1|,|k_2|,|k_3|\} \gtrsim|k_4|.
\end{align}
Since all factors in the left side of \eqref{d} are nonzero and $k_1+k_2+k_3+k_4=0$, we also see 
\begin{equation}\label{dualwork2}
|k_1+k_2||k_2+k_3||k_3+k_1|\gtrsim|k_i|,\quad i=1,2,3,4.
\end{equation}
By \eqref{d} and \eqref{dualwork2}, for $s\le2$ and sufficiently small $\epsilon$, we get
$$\frac{|k_4|^{s-1}}{|k_1|(|k_1+k_2||k_2+k_3||k_3+k_1|)^{\frac{5}{6}-\frac{47}{3}\epsilon}}\lesssim1,$$
which implies \eqref{dualwork3}.

\subsubsection{Proof of Lemma \ref{l6}}
The argument is similar to that for KdV in \cite B. Regard $\|\chi_{[0,\delta]}(t)u\|_{L_{t,x}^6(\mathbb R\times\mathbb T)}$ as $\|u\|_{L_{t,x}^6(\mathbb T^2)}$.
By the Fourier inversion formula and a change of variables, we have
\begin{align*}
	u(x,t)&=\int_\mathbb R\sum_k\widehat u(k,\tau)e^{ikx+i\tau t}d\tau\\
	&=\int_\mathbb R\sum_k\widehat u(k,\tau-\alpha k^5+\beta k^3)e^{ikx+i\tau t}e^{-i(\alpha k^5-\beta k^3)t}d\tau=\int_\mathbb Re^{i\tau t}e^{Lt}h_\tau d\tau,
\end{align*}
where $h_\tau=\sum_k\widehat u(k,\tau-\alpha k^5+\beta k^3)e^{ikx}.$
If we have
\begin{equation} \label{h}
	\|e^{Lt}h\|_{L_{t,x}^6(\mathbb T^2)}\lesssim_{\alpha,\beta,\epsilon}\|h\|_{H^\epsilon},
\end{equation}
for any $h\in H^\epsilon$, $\epsilon>0$, by the Minkowski inequality, \eqref{h} and the Cauchy-Schwarz inequality we obtain
\begin{align*}
	\|u\|_{L^6_{x,t}(\mathbb{T}^2)}&\le\int_\mathbb R\|e^{Lt}h_\tau\|_{L^6_{x,t}(\mathbb{T}^2)}d\tau\\
	&\lesssim_{\alpha,\beta,\epsilon}\int_\mathbb R\|h_\tau\|_{H^\epsilon}d\tau\\
	&\lesssim\left(\int_\mathbb R\langle\tau\rangle^{-2b}d\tau\right)^\frac12\left(\int_\mathbb R\langle\tau\rangle^{2b}\|h_\tau\|_{H^\epsilon}^2d\tau\right)^\frac12\lesssim_b\|u\|_{X_{\delta}^{\epsilon,b}}
\end{align*}
for $b>1/2$. 

It remains to prove \eqref{h}.
We first consider ${P_K}h:=\sum_{|k|\sim K}e^{ikx}h_k$ with $K$ dyadic. 
Then, 
\begin{align*}
	(e^{Lt}P_Kh)^3&=\sum_{|k_1|, |k_2|, |k_3|\sim K}e^{i\{(-\alpha(k_1^5+k_2^5+k_3^5)+\beta(k_1^3+k_2^3+k_3^3))t+(k_1+k_2+k_3)x\} } h_{k_1}h_{k_2}h_{k_3}\\
	&=\sum_{|k_1|, |k_2|, |k_3|\sim K}e^{i\kappa t}e^{i(k_1+k_2+k_3)x}h_{k_1}h_{k_2}h_{k_3},
\end{align*}
where $\kappa=\kappa(k_1,k_2,k_3):=-\alpha(k_1^5+k_2^5+k_3^5)+\beta(k_1^3+k_2^3+k_3^3)$. 
Hence,
\begin{align}\label{i}
	\nonumber
	&\|e^{Lt}P_Kh\|_{L_{t,x}^6(\mathbb T^2)}^6\\
	\nonumber
	&=\iint_{\mathbb T^2}\sum_{k_1,k_2,k_3}e^{i((k_1+k_2+k_3)x+\kappa t)}h_{k_1}h_{k_2}h_{k_3} \overline{\sum_{\tilde k_1,\tilde k_2, \tilde k_3}e^{i((\tilde k_1+ \tilde k_2+\tilde k_3)x+\tilde \kappa t)}h_{\tilde k_1}h_{\tilde k_2}h_{\tilde k_3}}dxdt\\
	\nonumber
	&=\sum_{k_1+k_2+k_3=\tilde k_1+\tilde k_2+\tilde k_3\atop\kappa=\tilde\kappa}
	h_{k_1}h_{k_2}h_{k_3} \overline{h_{\tilde k_1}h_{\tilde k_2}h_{\tilde k_3}}\\
	&=\sum_{p,q}\bigg|\sum_{(k_1,k_2,k_3)\in A_{p,q}}h_{k_1}h_{k_2}h_{k_3}\bigg|^2, 	
\end{align}
where $A_{p,q}= \{(k_1,k_2,k_3):|k_i|\sim K, k_1+k_2+k_3=p, \kappa(k_1,k_2,k_3)=q\}$.
We now claim that $|A_{p,q}| \lesssim K^\epsilon$ for any $\epsilon>0$.
From the equality
$$\frac{q+\alpha p^5-\beta p^3}{5\alpha(k_1^2 + k_2^2+  k_3^2+ k_1k_2 + k_2k_3 + k_3k_1)-3\beta}=(k_1 +k_2)(k_2+k_3)(k_3+k_1)$$
the left-hand side must be an integer. Thus if $\alpha$ is not a rational multiple of $\beta$, there is at most one possible value for the denominator; we can choose a value that solely depends on $\alpha$ and $\beta$ than which such cannot be smaller. Set $C_{\alpha,\beta}$ as the reciprocal of the denominator. If $\alpha$ is a rational multiple of $\beta$, choose $C_{\alpha,\beta}\ne0$ such that both $C_{\alpha,\beta}\alpha$ and $C_{\alpha,\beta}\beta$ are integers. Either way $C_{\alpha,\beta}(q+\alpha p^5-\beta p^3)$ also has to be an integer. Then we have
\begin{align*}
&C_{\alpha,\beta}(q+\alpha p^5-\beta p^3)\\
&=(k_1 +k_2)(k_2+k_3)(k_3+k_1)C_{\alpha,\beta}(5\alpha(k_1^2 + k_2^2+  k_3^2+ k_1k_2 + k_2k_3 + k_3k_1)-3\beta).
\end{align*}
Since the number of divisors of integer $n$ is $o(n^\epsilon)$ for any $\epsilon>0$, we see that the triple $(k_1+k_2, k_2+k_3, k_3+k_1)$ or equivalently $(k_1,k_2,k_3)$ has no more than $(C_{\alpha,\beta}K)^{\epsilon}$ choices.
Now by \eqref{i} and the Cauchy-Schwarz inequality, we conclude that
\begin{align*}
	\|e^{Lt}h\|_{L_{t,x}^6(\mathbb T^2)}&
	\le\sum_K\|e^{Lt}P_Kh\|_{L^6_{t,x}} \\
	&\le\sum_K\bigg(\sum_{p,q}\Big(\sum_{(k_1,k_2,k_3)\in A_{p,q}}|h_{k_1}h_{k_2}h_{k_3}|^2\Big)\Big(\sum_{(k_1,k_2,k_3)\in A_{p,q}}1\Big)\bigg)^{\frac16}\\
	&\lesssim_{\alpha,\beta}\sum_K K^{\epsilon/6} \bigg(\sum_{|k_1|,|k_2|,|k_3|\sim K}|h_{k_1}h_{k_2}h_{k_3}|^2\bigg)^{\frac16}\\
	&\le\bigg(\sum_KK^{-\epsilon/3}\bigg)^{\frac12}\bigg(\sum_KK^{\epsilon}\|P_Kh\|_{L_x^2}^2\bigg)^{\frac12}\lesssim_\epsilon\|h\|_{H^{\epsilon}}.
\end{align*}

\subsection{The last step of the proof}
Now we finish the proof of Proposition \ref{thm1}.
Applying the estimates in Proposition \ref{goodlem} to the equation \eqref{morphback}, we obtain for $s\le2$
\begin{align}
\nonumber
\|u(t)&-e^{-\gamma t}e^{Lt}g\|_{H^s}\\
\nonumber
&\qquad\lesssim_{\alpha,\beta,\gamma}\|u(t)\|^2+\|g\|^2+\int_0^t\big(\|u(r)\|^2+\|u(r)\|\|e^{\gamma r-Lr}f\|+\|u(r)\|^3\big)dr\\
\nonumber
&\qquad\quad+\bigg\|\int_0^t e^{(-\gamma + L)(t-r)}f dr \bigg\|_{H^s} +\bigg\|\int_0^te^{(-\gamma+L)(t-r)}\mathcal R(u)(r)dr\bigg\|_{H^s}\\
\nonumber
&\qquad\lesssim_{\alpha,\beta}\|u(t)\|^2+\|g\|^2+\int_0^t\big(\|u(r)\|^2+e^{\gamma r}\|u(r)\|\|f\|+\|u(r)\|^3\big)dr\\
\label{int}
&\qquad\quad+\|f\|+\bigg\|\int_0^te^{-\gamma(t-r)}e^{L(t-r)}\mathcal R(u)(r)dr\bigg\|_{H^s}.
\end{align}
Here we used the fact that for $s\le 2$,
\begin{align*}
\bigg\|\int_0^t e^{(-\gamma +L)(t-r)}f dr \bigg\|_{H^s}&\sim
\bigg\||k|^s\frac{f_k}{\gamma+i\alpha k^5-i\beta k^3}\left(1-e^{(-\gamma-i\alpha k^5+i\beta k^3)t}\right)\bigg\|_{l_k^2} \\
&\lesssim\bigg\||k|^s\frac{|f_k|}{|\alpha k^5-\beta k^3|}\bigg\|_{l_k^2}\lesssim_{\alpha,\beta}\|f\|.
\end{align*}

Now we handle the last term in \eqref{int} using the embedding 
$$X_{\delta}^{s,b}\subset L_t^\infty H_x^s([0,\delta]\times \mathbb{T}), \quad b>1/2,$$
and the following lemma (See Lemma 3.3 and its proof in \cite{ET2}).
\begin{lem}\label{ri}
	Let $b>1/2$. 
	Then for any $\delta<1$,
$$\bigg\|\int_0^te^{-\gamma(t-r)}e^{L(t-r)}F(r)dr\bigg\|_{X_\delta^{s,b}}\lesssim_{\gamma,b}\|F\|_{X_\delta^{s,b-1}}.$$
\end{lem}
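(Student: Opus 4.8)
The plan is to follow the Fourier-analytic scheme for damped Duhamel terms, reducing to a whole-line estimate with a fixed smooth time cutoff and then isolating the contribution of the damped propagator on the modulation variable. Write $\phi(k)=\alpha k^5-\beta k^3$, so that $e^{-\gamma t}e^{Lt}$ acts on the $x$-Fourier side as multiplication by $e^{-(\gamma+i\phi(k))t}$, while the $X^{s,b}$-weight measures $\langle\tau+\phi(k)\rangle$. Since $\delta<1$, I would fix a cutoff $\psi\in C_c^\infty(\mathbb R)$ with $\psi\equiv1$ on $[0,1]$; choosing a near-optimal extension $\tilde F$ of $F$ with $\|\tilde F\|_{X^{s,b-1}}\le 2\|F\|_{X_\delta^{s,b-1}}$ and noting that on $[0,\delta]$ the Duhamel integral of $\tilde F$ agrees with that of $F$, it suffices to prove the unrestricted bound
$$\Big\|\psi(t)\int_0^t e^{-(\gamma+i\phi(k))(t-r)}\tilde F_k(r)\,dr\Big\|_{X^{s,b}}\lesssim_{\gamma,b}\|\tilde F\|_{X^{s,b-1}}$$
and then take the infimum over extensions.

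Setting $a=a(k)=\gamma+i\phi(k)$ and inserting the time-Fourier representation $\tilde F_k(r)=\int_{\mathbb R}\widehat{\tilde F}(k,\sigma)e^{i\sigma r}\,d\sigma$, I would evaluate $\int_0^t e^{ar}\tilde F_k(r)\,dr$ explicitly to obtain the splitting
$$v_k(t)=\psi(t)\int_{\mathbb R}\frac{\widehat{\tilde F}(k,\sigma)}{a+i\sigma}e^{i\sigma t}\,d\sigma\;-\;\psi(t)e^{-\gamma t}e^{-i\phi(k)t}\int_{\mathbb R}\frac{\widehat{\tilde F}(k,\sigma)}{a+i\sigma}\,d\sigma=:v_k^{(1)}(t)+v_k^{(2)}(t),$$
where $a+i\sigma=\gamma+i(\sigma+\phi(k))$, so $|a+i\sigma|=(\gamma^2+(\sigma+\phi(k))^2)^{1/2}\sim_\gamma\langle\sigma+\phi(k)\rangle$. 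The point is that the damping keeps this denominator bounded away from zero, converting the modulation weight from $b-1$ to $b$ cleanly.

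For $v^{(1)}$, before cutting off, its space-time Fourier transform is $\widehat{\tilde F}(k,\tau)/(\gamma+i(\tau+\phi(k)))$, and the elementary bound $\langle\tau+\phi(k)\rangle/|\gamma+i(\tau+\phi(k))|\lesssim_\gamma1$ shows it obeys the claimed estimate; the multiplication by $\psi(t)$ is then absorbed by the standard fact that a smooth compactly supported time cutoff is bounded on $X^{s,b}$ for $|b|\le1$ (convolution in $\tau$ against the Schwartz function $\widehat\psi$ against the slowly varying weight $\langle\cdot\rangle^b$). For $v^{(2)}$, the factor $\psi(t)e^{-\gamma t}$ is Schwartz in $t$ and carries the free oscillation $e^{-i\phi(k)t}$, so its $X^{s,b}$-norm reduces to $\|\langle k\rangle^s c_k\|_{l_k^2}$ with $c_k=\int\widehat{\tilde F}(k,\sigma)/(\gamma+i(\sigma+\phi(k)))\,d\sigma$; by Cauchy--Schwarz in $\sigma$,
$$|c_k|\le\Big(\int\langle\sigma+\phi(k)\rangle^{2(b-1)}|\widehat{\tilde F}(k,\sigma)|^2\,d\sigma\Big)^{1/2}\Big(\int\frac{\langle\sigma+\phi(k)\rangle^{2(1-b)}}{\gamma^2+(\sigma+\phi(k))^2}\,d\sigma\Big)^{1/2},$$
and the second factor is a finite constant $C(\gamma,b)$ precisely because $b>1/2$ makes the integrand decay like $\langle\mu\rangle^{-2b}$ at infinity.

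Summing the first factor in $k$ against $\langle k\rangle^{2s}$ then gives $\|\tilde F\|_{X^{s,b-1}}$, completing the bound for $v^{(2)}$. I expect the main obstacle to be this term $v^{(2)}$: it is the genuinely non-free, boundary-in-time contribution, and it is exactly where the hypothesis $b>1/2$ is forced, through the convergence of $\int\langle\mu\rangle^{2(1-b)}/(\gamma^2+\mu^2)\,d\mu$. The remaining technical care---verifying that the smooth cutoff and the extension/infimum procedure cost only constants depending on $\gamma$ and $b$---is routine but must be tracked to keep the implicit constant independent of $\delta$, for which $\delta<1$ is used to fix a single $\psi$.
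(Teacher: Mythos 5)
Your proof is correct and is essentially the intended argument: the paper does not prove Lemma \ref{ri} itself but defers to Lemma 3.3 of \cite{ET2}, whose proof proceeds exactly as yours does --- evaluate the Duhamel integral on the Fourier side, split off the boundary term carrying the damped free evolution $e^{-(\gamma+i(\alpha k^5-\beta k^3))t}$, use $|\gamma+i(\tau+\alpha k^5-\beta k^3)|\gtrsim_\gamma\langle\tau+\alpha k^5-\beta k^3\rangle$ to trade the weight $b-1$ for $b$, and control the boundary term by Cauchy--Schwarz, which is where $b>1/2$ enters. Your handling of the extension/infimum and the fixed cutoff (using $\delta<1$) matches the standard treatment, so there is nothing to add.
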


For $b>1/2$ and $t<\delta$ with $\delta$ given in Theorem \ref{thm2}, we have 
\begin{align}\label{rinitiate3}
	\nonumber
	\bigg\|\int_0^te^{-\gamma(t-r)}e^{L(t-r)}\mathcal R(u)(r)dr\bigg\|_{H^s}&\le\bigg\|\int_0^te^{-\gamma(t-r)}e^{L(t-r)}\mathcal R(u)(r)dr\bigg\|_{L_{t\in[0,\delta]}^\infty H_x^s}\\
	\nonumber
	&\lesssim\bigg\|\int_0^te^{-\gamma(t-r)}e^{L(t-r)}\mathcal R(u)(r)dr\bigg\|_{X_\delta^{s,b}}\\
	&\lesssim_{\gamma,b}\|\mathcal R(u)\|_{X_\delta^{s,b-1}}\lesssim_{\alpha,\beta,b}\|u\|_{X_\delta^{0,1/2}}^3.
\end{align}
Here we used \eqref{prop3} for the last inequality.

Combining \eqref{int} and \eqref{rinitiate3}, we see that for $t<\delta$,
\begin{align*}
\|&u(t)-e^{-\gamma t}e^{Lt}g\|_{H^s}\\
&\lesssim_{\alpha,\beta,\gamma}\|u(t)\|^2+\|g\|^2+\int_0^t\big(\|u(r)\|^2+\|u(r)\|\|f\|+\|u(r)\|^3\big)dr+\|f\|+\|u\|_{X_\delta^{0,1/2}}^3.
\end{align*}
Now, fix $t$ large. 
For $r\le t$, we have $\|u(r)\|\le\|g\|+\|f\|/\gamma$ from \eqref{absorb}.
Hence, from \eqref{bd} we see for any $j\le t/\delta$
$$\|u\|_{X_{[(j-1)\delta,j\delta]}^{0,1/2}}\lesssim\|u((j-1)\delta)\|\le\|g\|+\|f\|/\gamma.$$
Consequently, we have
$$\|u(j\delta)-e^{j\delta(L-\gamma)}u((j-1)\delta)\|_{H^s}\lesssim1$$
if we choose some $\alpha>0$ so that $\delta\sim\langle\|g\|+\|f\|/\gamma\rangle^{-\alpha}$ with the implicit constant depending on $\|f\|$, $\|g\|$ and $\gamma$.
Using this we obtain (with $J=t/\delta$)
\begin{align*}
	\|u(J\delta)-e^{J\delta(L-\gamma)}g\|_{H^s}&\le\sum_{j=1}^J\|e^{(J-j)\delta(L-\gamma)}u(j\delta)-e^{(J-j+1)\delta(L-\gamma)}u((j-1)\delta)\|_{H^s}\\
	&=\sum_{j=1}^Je^{-(J-j)\delta\gamma}\|u(j\delta)-e^{\delta(L-\gamma)}u((j-1)\delta)\|_{H^s}\\
	&\lesssim\sum_{j=1}^Je^{-(J-j)\delta\gamma}\lesssim\frac1{1-e^{-\delta\gamma}}.
\end{align*}
This completes the proof of \eqref{j}. Taking similar steps to above one can easily show the continuity of $u(t)-e^{-\gamma t}e^{Lt}g$ in $H^s$.

\section{Proofs of Lemmas \ref{lem2} and \ref{lem}}\label{sec6}
In this section, we provide the proofs of Lemmas \ref{lem2} and \ref{lem} which follow the arguments for the KdV equation in \cite B.

\subsection{Proof of Lemma \ref{lem2}}
Estimates \eqref{w1} and \eqref{w2} are standard but here we provide a detailed proof for the convenience of the reader.
Without loss of generality we set $s=0$. 

Let $\eta\in C_0^{\infty}(\mathbb{R})$ be a smooth function supported on $[-2,2]$ with $\eta=1$ on $[-1,1]$.
We first prove \eqref{w1}. 
By the definition of $X^{s,b}$-norm, we see 
\begin{align*}
\big\|\eta(t)e^{Lt}g\big\|_{X^{0,\frac12}}
=\bigg(\sum_{k\in\mathbb Z}\int_\mathbb R\langle\tau+\alpha k^5-\beta k^3\rangle|g_k\widehat\eta(\tau+\alpha k^5-\beta k^3)|^2d\tau\bigg)^\frac12.
\end{align*}
Since $\widehat\eta$ decays faster than any finite power of $\langle\tau+\alpha k^5-\beta k^3\rangle^{-1}$, we have
\begin{equation}\label{lem213}
	\|\eta(t)e^{Lt}g\|_{X^{0,\frac12}}\lesssim\bigg(\sum_{k\in\mathbb Z}|g_k|^2\bigg)^\frac12=\|g\|
\end{equation}
with the implicit constant depending only on the choice of $\eta$.
Similarly the $l_k^2L_\tau^1$-norm of the  $\mathcal F\big(\eta(t)e^{Lt}g(x)\big)$ is
$$\bigg(\sum_{k\in\mathbb Z}\bigg|g_k\int_\mathbb R|\widehat\eta(\tau+\alpha k^5-\beta k^3)|d\tau\bigg|^2\bigg)^\frac12\lesssim\|g\|,$$
which, along with \eqref{lem213}, proves \eqref{w1}.

Now we prove \eqref{w2}. We present a neater proof following \cite{CKSTT1}. By applying a smooth cutoff one may assume that $F$ is supported on $\mathbb T\times[-3,3]$. 
Let us set $a(t):=\text{sgn}(t)\tilde\eta(t)$, where $\tilde\eta\in C_0^{\infty}$ is supported on $[-10,10]$ with $\tilde\eta(t)=1$ on $[-5,5]$. From the identity
$$\chi_{[0,t]}(t')=\frac{\text{sgn}(t)}{2}(a(t')+a(t-t'))$$
for all $t\in[-2,2]$ and $t'\in[-3,3]$, we may write $\eta(t)\int_0^te^{L(t-t')}F(t')dt'$ as a linear combination of
\begin{equation}\label{finfine1}\eta(t)e^{Lt}\int_\mathbb Ra(t')e^{-Lt'}F(t')dt'
\end{equation}
and
\begin{equation}\label{finfine2}\eta(t)\int_\mathbb Ra(t-t')e^{L(t-t')}F(t')dt'.
\end{equation}
For the contribution of \eqref{finfine1}, by \eqref{w1}, it suffices to show that
$$\left\|\int_\mathbb Ra(t')e^{-Lt'}F(t')dt'\right\|_{H^s}\lesssim\|F\|_{Z^s}.$$
We note that the space Fourier transform of $\int_\mathbb Ra(t')e^{-Lt'}F(t')dt'$ is written as
\begin{align*}
	\mathcal F_x\left(\int_\mathbb Ra(t')e^{-Lt'}F(x,t')dt'\right)(k)&=\int_\mathbb Ra(t')e^{i(\alpha k^5-\beta k^3)t'}\int_\mathbb Re^{i\tau t'}\widehat F(k,\tau)d\tau dt'\\
	&=-2\pi\int_\mathbb R\widehat a(\tau+\alpha k^5-\beta k^3)\widehat F(k,\tau)d\tau.
\end{align*}
Since one has the easily verified bound
\begin{equation}\label{finfine3}
	|\widehat a|\lesssim\langle\cdot\rangle^{-1},
\end{equation}
we obtain 
\begin{align*}
	\left\|\int_\mathbb Ra(t')e^{-Lt'}F(t')dt'\right\|_{H^s}&=\left\|\langle k\rangle^s\int_\mathbb R\widehat a(\tau+\alpha k^5-\beta k^3)\widehat F(k,\tau)d\tau\right\|_{l_k^2}\\
	&\lesssim \left\|\frac{\langle k\rangle^s\widehat F(k,\tau)}{\langle\tau+\alpha k^5-\beta k^3\rangle}\right\|_{l_k^2L_\tau^1}\le\|F\|_{Z^s}.
\end{align*}
Now consider the contribution of \eqref{finfine2}. We may ignore $\eta(t)$. Since the space-time Fourier transform of $\int_\mathbb Ra(t-t')e^{L(t-t')}F(t')dt'$ is $2\pi\widehat a(\tau+\alpha k^5-\beta k^3)\widehat F(k,\tau)$, by the definitions of $Y^s$- and $Z^s$-norms as well as \eqref{finfine3}, the claim follows as
$$\left\|\mathcal F^{-1}\left(\widehat a(\tau+\alpha k^5-\beta k^3)\widehat F(k,\tau)\right)\right\|_{Y^s}\lesssim\left\|\mathcal F^{-1}\left(\frac{\widehat F(k,\tau)}{\langle\tau+\alpha k^5-\beta k^3\rangle}\right)\right\|_{Y^s}=\|F\|_{Z^s}.$$

\subsection{Proof of Lemma \ref{lem}}\label{subsec5.2}
First, \eqref{w4} is a standard estimate for dispersive equations; see, for example, \cite{Ta,AKS}. To show \eqref{w5}, by \eqref{w4} it suffices to consider the second part of the $Z_\delta^s$-norm. This follows from
\begin{align*}
	\bigg\|\frac{\langle k\rangle^s\widehat{u\eta(\cdot/\delta)}(k,\tau)}{\langle\tau+\alpha k^5-\beta k^3\rangle}\bigg\|_{l_k^2L_\tau^1}
	&\lesssim\big\|\langle k\rangle^s\widehat{u\eta(\cdot/\delta)}(k,\tau)\big\|_{l_k^2L_\tau^{\infty-}}\bigg\|\frac1{\langle\tau+\alpha k^5-\beta k^3\rangle}\bigg\|_{L_\tau^{1+\epsilon}}\\
	&\lesssim_\epsilon\|\delta\widehat\eta(\delta\tau)\|_{L_\tau^{\infty-}}\big\|\langle k\rangle^s\widehat u(k,\tau)\big\|_{l_k^2L_\tau^1} \\
	&\lesssim\delta^{1-}\big\|\langle k\rangle^s\widehat u(k,\tau)\big\|_{l_k^2L_\tau^1}.
\end{align*}
In the first inequality we used H\"older's inequality, and in the second, Young's inequality in the $\tau$ variable. 
The estimate \eqref{157} follows in a similar manner with the only difference that one does not need Young's inequality for $f$ is independent of time.

Now it remains to prove \eqref{w3}. 
To do so, we need to show the following two inequalities:
\begin{align}
\label{fflem1}
\|\partial_x(u_1u_2)\|_{X_\delta^{s,-\frac12}}&\lesssim\|u_1\|_{X_\delta^{s,\frac3{10}}}\|u_2\|_{X_\delta^{s,\frac3{10}}},\\
\label{fflem2}
\bigg\|\frac{\langle k\rangle^s\mathcal F(\partial_x(\eta u_1u_2))}{\langle\tau+\alpha k^5-\beta k^3\rangle}\bigg\|_{l_k^2L_\tau^1}&\lesssim\|u_1\|_{X_\delta^{s,\frac3{10}}}\|u_2\|_{X_\delta^{s,\frac3{10}}}.
\end{align}
Here we assume $\int_\mathbb Tu_1(x,t)dx=\int_\mathbb Tu_2(x,t)dx=0.$
\begin{rem}
One can also bound up to $X_\delta^{s,-\frac3{10}}$-norm in \eqref{fflem1} and $\langle\tau+\alpha k^5-\beta k^3\rangle^{\frac45+}$ in the denominator in the $l_k^2L_\tau^1$-norm in \eqref{fflem2}. However, for simplicity we retain the definition form of the $Y^s$-norm. See Remark \ref{remref} for details.
\end{rem} 

Note that
\begin{equation}\label{fflemnote1}
	|\mathcal F(\partial_x(u_1u_2))(k,\tau)|\le|k|\sum_{k_1\ne0}\int_\mathbb R|\widehat{u_1}(k_1,\tau_1)||\widehat{u_2}(k-k_1,\tau-\tau_1)|d\tau_1.
\end{equation}
We shall define
\begin{equation}\label{fflemnote2}
	c_j(k,\tau):=|k|^s\langle\tau+\alpha k^5-\beta k^3\rangle^\frac14|\widehat{u_j}(k,\tau)|
\end{equation}
for $j=1,2$.

\subsection*{Proof of \eqref{fflem1}}
From \eqref{fflemnote1} and \eqref{fflemnote2},
\begin{align}\label{fflem1l1}
&\frac{|k|^s|\mathcal F(\partial_x(u_1u_2))(k,\tau)|}{\langle\tau+\alpha k^5-\beta k^3\rangle^\frac12}\\
\nonumber&\le\sum_{k_1}\int_\mathbb R\frac{|k|^{s+1}|k_1|^{-s}|k-k_1|^{-s}c_1(k_1,\tau_1)c_2(k-k_1,\tau-\tau_1)}{\langle\tau\!\!\;+\!\!\;\alpha k^5\!\!\;-\!\!\;\beta k^3\rangle^\frac12\langle\tau_1\!\!\;+\!\!\;\alpha k_1^5\!\!\;-\!\!\;\beta k_1^3\rangle^\frac14\langle\tau\!\!\;-\!\!\;\tau_1\!\!\;+\!\!\;\alpha(k\!\!\;-\!\!\;k_1)^5\!\!\;-\!\!\;\beta(k\!\!\;-\!\!\;k_1)^3\rangle^\frac14}d\tau_1.
\end{align}
By assumptions on $u_1$ and $u_2$, we have $c_i(0,\tau)=0$, $i=1,2$, so in \eqref{fflem1l1} we may assume $k_1\ne0$ and $k-k_1\ne0$ as well as $k\ne0$. 
Since
\begin{align}
\nonumber&|(\tau+\alpha k^5-\beta k^3)-(\tau_1+\alpha k_1^5-\beta k_1^3)-(\tau-\tau_1+\alpha(k-k_1)^5-\beta(k-k_1)^3)|\\
\label{powerbound}&=|k_1(k-k_1)k(5\alpha(k^2+k_1^2-kk_1)-3\beta)|\gtrsim_{\alpha,\beta}k^4,
\end{align}
it suffices to sum up the contributions of the following cases:
	\begin{align}
		\label{fflem1c1}&|\tau+\alpha k^5-\beta k^3|\gtrsim k^4,\\
		\label{fflem1c2}&|\tau_1+\alpha k_1^5-\beta k_1^3|\gtrsim k^4,\\
		\label{fflem1c3}&|\tau-\tau_1+\alpha(k-k_1)^5-\beta(k-k_1)^3|\gtrsim k^4.
	\end{align}
	
We begin with the first case \eqref{fflem1c1}. 
From \eqref{fflem1c1} and
\begin{equation}\label{fflem1l2}
	|k|^s\le2^s|k_1|^s|k-k_1|^s,
\end{equation}
the right side of \eqref{fflem1l1} is bounded by
\begin{equation}\label{fflem1l3}
C_{\alpha,\beta,s}\sum_{k_1}\int_\mathbb R\frac{c_1(k_1,\tau_1)c_2(k-k_1,\tau-\tau_1)}{|k|\langle\tau_1+\alpha k_1^5-\beta k_1^3\rangle^\frac14\langle\tau-\tau_1+\alpha(k-k_1)^5-\beta(k-k_1)^3\rangle^\frac14}d\tau_1.
\end{equation}
Define for $j=1,2$
\begin{equation}\label{tj}
T_j(x,t):=\sum_k\int_\mathbb Re^{i(kx+\tau t)}\frac{c_j(k,\tau)}{\langle\tau+\alpha k^5-\beta k^3\rangle^\frac14}d\tau
\end{equation}
and $\eqref{fflem1l3}\le\widehat{T_1T_2}(k,\tau).$
Hence the contribution to the left side of \eqref{fflem1} is bounded as
$$\bigg(\sum_{k\ne0}\int_\mathbb R\big|\widehat{T_1T_2}(k,\tau)\big|^2d\tau\bigg)^\frac12\sim\|T_1T_2\|_{L_x^2L_{t,\text{loc}}^2}\lesssim\|T_1\|_{L_x^4L_{t,\text{loc}}^4}\|T_2\|_{L_x^4L_{t,\text{loc}}^4}.$$
Here we have a local estimate for we assume $u_1$ and $u_2$ to be local-in-time functions; to be specific one may write $u_j'(x,t)=u_j(x,t)\eta(t)$ for a suitable localisation function $\eta$ and we accordingly get $\widehat{u_j'}=\widehat{u_j}*\widehat\eta$ as well as $\eqref{fflem1l3}=\widehat{T_1T_2}*\widehat\eta=\widehat{T_1T_2\eta}$. With this remark, we shall use the estimate
	\begin{equation}\label{t2propeq}
		\|\chi_{[0,\delta]}(t)f\|_{L^4(\mathbb T\times\mathbb R)}\lesssim_{\alpha,\beta}\|f\|_{X_{\delta}^{0,\frac3{10}}}
	\end{equation}
whose proof is given at the end of this section. Using \eqref{t2propeq}, we have
\begin{equation}\label{fflem1l4}
	\|T_j\|_4\lesssim_{\alpha,\beta}\|T_j\|_{X^{0,\frac3{10}}}=\bigg\|\frac{c_j}{\langle\tau+\alpha k^5-\beta k^3\rangle^{\frac14-\frac3{10}}}\bigg\|_{L^2}=\|u_j\|_{X^{s,\frac3{10}}}
\end{equation}
for $j=1,2$. 

For the second case \eqref{fflem1c2}, again by \eqref{fflem1l2}, the right side of \eqref{fflem1l1} is bounded by
\begin{equation*}
\sum_{k_1}\int_\mathbb R\frac{c_1(k_1,\tau_1)c_2(k-k_1,\tau-\tau_1)}{\langle\tau+\alpha k^5-\beta k^3\rangle^\frac12\langle\tau-\tau_1+\alpha(k-k_1)^5-\beta(k-k_1)^3\rangle^\frac14}d\tau_1.
\end{equation*}
	Defining
	$$T_3(x,t):=\sum_k\int_\mathbb Re^{i(kx+\tau t)}c_1(k,\tau)d\tau,$$
	the contribution to the left side of \eqref{fflem1} is bounded by
	\begin{equation}\label{fflem1l6}
		\bigg(\sum_{k\ne0}\int_\mathbb R\bigg|\frac{\widehat{T_2T_3}(k,\tau)}{\langle\tau+\alpha k^5-\beta k^3\rangle^\frac12}\bigg|^2d\tau\bigg)^\frac12.
	\end{equation}
Again $u_1$ and $u_2$ are thought to be local in time, and we shall use the estimate
	\begin{equation}\label{t2propcor}
	\|f\|_{X_\delta^{0,-\frac3{10}}}\lesssim_{\alpha,\beta}\|f\|_{L^\frac43(\mathbb T^2)};
	\end{equation}
indeed, for some function $h\in L^2$ with $\|h\|_{L^2}=1$, we see
\begin{align*}
	\|\eta(t/\delta)f\|_{X^{0,-\frac3{10}}}&=\|\chi_{[-2\delta,2\delta]}(t)\eta(t/\delta)f\|_{X^{0,-\frac3{10}}}\\
	&=\big\|\langle m+\alpha k^5-\beta k^3\rangle^{-\frac3{10}}\widehat{\eta(\cdot/\delta)f}\big\|_{l_{k,m}^2}\\
	&=\sum_{k,m}\langle m+\alpha k^5-\beta k^3\rangle^{-\frac3{10}}\widehat{\eta(\cdot/\delta)f}\,\widehat h\\
	&\le\|\eta(t/\delta)f\|_{L_{x,t}^\frac43}\|(\langle m+\alpha k^5-\beta k^3\rangle^{-\frac3{10}}\widehat h)^\vee\|_{L_{x,t}^4}\\
	&\lesssim_{\alpha,\beta}\|\eta(t/\delta)f\|_{L^\frac43}\|h\|_{L^2}.
\end{align*}
In the last inequality here, we used \eqref{t2propeq}.
Applying \eqref{t2propcor}, H\"older's inequality and \eqref{fflem1l4} to \eqref{fflem1l6}, we now have the estimate
\begin{align*}
\eqref{fflem1l6}\le\left\|\frac{\widehat{T_2T_3}}{\langle\tau+\alpha k^5-\beta k^3\rangle^\frac3{10}}\right\|_{l_k^2L_{\tau}^2}\lesssim_{\alpha,\beta}\|T_2T_3\|_{L^\frac43}&\le\|T_2\|_{L^4}\|T_3\|_{L^2}\\
	&\le\|u_1\|_{X^{s,\frac14}}\|u_2\|_{X^{s,\frac3{10}}}.
\end{align*}

The case \eqref{fflem1c3} is similar to \eqref{fflem1c2}. This proves \eqref{fflem1}.

\subsection*{Proof of \eqref{fflem2}}
Using \eqref{fflem1l2}, we now consider
\begin{align}\label{f}
&\frac{|k|^s|\mathcal F(\partial_x(u_1u_2))(k,\tau)|}{\langle\tau+\alpha k^5-\beta k^3\rangle}\\
\nonumber
&\lesssim_{\alpha,\beta,s}\sum_{k_1}\int_\mathbb R\!\frac{|k|c_1(k_1,\tau_1)c_2(k-k_1,\tau-\tau_1)}{\langle\tau\!\!\:+\!\!\:\alpha k^5\!\!\:-\!\!\:\beta k^3\rangle\langle\tau_1\!\!\:+\!\!\:\alpha k_1^5\!\!\:-\!\!\:\beta k_1^3\rangle^\frac14\langle\tau\!\!\:-\!\!\:\tau_1\!\!\:+\!\!\:\alpha(k\!\!\:-\!\!\:k_1)^5\!\!\:-\!\!\:\beta(k\!\!\:-\!\!\:k_1)^3\rangle^\frac14}d\tau_1.
\end{align}
We assume $u_1$ and $u_2$ to be local-in-time and divide the computation into the cases \eqref{fflem1c1}--\eqref{fflem1c3} again.

Consider the contribution of \eqref{fflem1c1}. We note that $\langle\tau+\alpha k^5-\beta k^3\rangle\sim k^4+|\tau+\alpha k^5-\beta k^3|$. 
To estimate the left side of \eqref{fflem2}, we consider a sequence $\{a_k\}$ such that $a_k\ge0$, $\sum a_k^2=1$.
Then, with $T_1$ and $T_2$ as in \eqref{tj}, the contribution is at most 
		\begin{equation}\label{fflem2l1}
\sum_k\int_\mathbb R\frac{a_k|k|}{k^4+|\tau+\alpha k^5-\beta k^3|}\widehat{T_1T_2}(k,\tau)d\tau.
\end{equation}
Let us define the function
$$T_4(x,t):=\sum_k\int_\mathbb Re^{i(kx+\tau t)}\frac{a_k|k|}{k^4+|\tau+\alpha k^5-\beta k^3|}d\tau$$
whose $L^2$-norm is
$$\|T_4\|_2\lesssim\bigg(\sum_k\int_\mathbb R\frac{a_k^2|k|^2}{(k^4+|\tau|)^2}d\tau\bigg)^\frac12\lesssim\bigg(\sum a_k^2\bigg)^\frac12=1.$$
By \eqref{fflem1l4}, we then get
		$$\eqref{fflem2l1}=\langle T_4,T_1T_2\rangle\le\|T_4\|_2\|T_1\|_4\|T_2\|_4\lesssim_{\alpha,\beta}\|u_1\|_{X^{s,\frac3{10}}}\|u_2\|_{X^{s,\frac3{10}}}.$$

For the second case \eqref{fflem1c2}, using \eqref{fflem1l2} we write
		\begin{align*}
			&\frac{|k|^s|\mathcal F(\partial_x(u_1u_2))(k,\tau)|}{\langle\tau+\alpha k^5-\beta k^3\rangle}\\
			&\lesssim_{\alpha,\beta,s}\sum_{k_1}\int_\mathbb R\frac{c_1(k_1,\tau_1)c_2(k-k_1,\tau-\tau_1)}{\langle\tau+\alpha k^5-\beta k^3\rangle\langle\tau-\tau_1+\alpha(k-k_1)^5-\beta(k-k_1)^3\rangle^\frac14}d\tau_1\\
			&=\frac1{\langle\tau\!\!\:+\!\!\:\alpha k^5\!\!\:-\!\!\:\beta k^3\rangle^{\frac12+}}\!\!\;\sum_{k_1}\!\!\;\int_\mathbb R\!\!\;\frac{c_1(k_1,\tau_1)c_2(k-k_1,\tau-\tau_1)}{\langle\tau\!\!\:+\!\!\:\alpha k^5\!\!\:-\!\!\:\beta k^3\rangle^{\frac12-}\langle\tau\!\!\:-\!\!\:\tau_1\!\!\:+\!\!\:\alpha(k\!\!\:-\!\!\:k_1)^5\!\!\:-\!\!\:\beta(k\!\!\:-\!\!\:k_1)^3\rangle^\frac14}d\tau_1.
		\end{align*}
To compute $l_k^2L_\tau^1$-norm, we first integrate in $\tau$ and use the Cauchy-Schwarz inequality:
		\begin{align*}
			&\int_\mathbb R\!\!\;\frac1{\langle\tau\!+\!\alpha k^5\!-\!\beta k^3\rangle^{\frac{1}{2}+}}\!\!\;\sum_{k_1}\!\!\;\int_\mathbb R\!\!\;\frac{c_1(k_1,\tau_1)c_2(k-k_1,\tau-\tau_1)}{\langle\tau\!+\!\alpha k^5\!-\!\beta k^3\rangle^{\frac{1}{2}-}\langle\tau\!-\!\tau_1\!+\!\alpha(k\!-\!k_1)^5\!-\!\beta(k\!-\!k_1)^3\rangle^{\frac14}}d\tau_1d\tau\\
			&\le\left(\int_\mathbb R\frac{1}{\langle\tau+\alpha k^5-\beta k^3\rangle^{1+}}d\tau\right)^\frac12\\
			&\quad\,\left(\int_\mathbb R\left(\sum_{k_1}\int_\mathbb R\frac{c_1(k_1,\tau_1)c_2(k-k_1,\tau-\tau_1)}{\langle\tau+\alpha k^5-\beta k^3\rangle^{\frac12-}\langle\tau-\tau_1+\alpha(k-k_1)^5-\beta(k-k_1)^3\rangle^\frac14}d\tau_1\right)^2\!d\tau\right)^\frac12.
		\end{align*}
What follows is similar to case \eqref{fflem1c2} in the proof of \eqref{fflem1}, except that the power $1/2$ of $\langle\tau+\alpha k^5-\beta k^3\rangle$ is replaced by $(1/2)-$. The last integral above is bounded by $\|u_1\|_{X^{s,\frac14}}\|u_2\|_{X^{s,\frac3{10}}}.$

The last case \eqref{fflem1c3} is similar to \eqref{fflem1c2}. This proves \eqref{fflem2}.

\subsubsection*{Proof of \eqref{t2propeq}}
It suffices to prove $\|f\|_{L^4(\mathbb T^2)}\le\|\chi_{[-\pi,\pi]}f\|_{X^{0,\frac3{10}}}$, from which we acquire for $\delta<1$
$$\|\chi_{[0,\delta]}(t)f\|_{L^4(\mathbb T\times\mathbb R)}
\le\|\eta(t/\delta)f\|_{L^4(\mathbb T^2)}\le\|\chi_{[-\pi,\pi]}\eta(t/\delta)f\|_{X^{0,\frac3{10}}}=\|\eta(t/\delta)f\|_{X^{0,\frac3{10}}}.$$
Note that $\|\cdot\|_{X^{s,b}}=\|e^{-Lt}\cdot\|_{H_x^sH_t^b}$. 
For $v:=e^{t(\alpha\partial_x^5+\beta\partial_x^3)}f$, we want to show $\|f\|_{L^4(\mathbb T^2)}\lesssim\|v\|_{L_x^2H_t^\frac3{10}}$.
Using the Fourier transform, one can write 
$$f(x,t)=\sum_ne^{i(nx-\alpha n^5t+\beta n^3t)}v_n(t)$$
and with $\Delta+n$ as a second variable
\begin{equation*}
(f\overline f)(x,t)=\sum_\Delta e^{i\Delta x}e^{-i(\alpha\Delta^5t-\beta\Delta^3t)}\sum_ne^{\lambda_{n,\Delta} in\Delta(n+\Delta)t}(v_n\overline{v_{n+\Delta}})(t),
\end{equation*}
where $\lambda_{n,\Delta}:=-5\alpha(n^2+\Delta^2+n\Delta)+3\beta$.
Then, by Plancherel's theorem, we have 
\begin{equation}\label{t2propffl2}
\|f\|_{L_{x,t}^4}^4=\|\mathcal F_x(f\overline f)\|_{l_\Delta^2L_t^2}^2=\bigg\|\sum_ne^{i\lambda_{n,\Delta} n\Delta(n+\Delta)t}v_n\overline{v_{n+\Delta}}\bigg\|_{l_\Delta^2L_t^2}^2. 
\end{equation}
First of all we may neglect negative $n$ in \eqref{t2propffl2} since
	\begin{align*}
		\bigg\|\sum_{n<0}e^{i\lambda_{n,\Delta} n\Delta(n+\Delta)t}v_n\overline{v_{n+\Delta}}\bigg\|_{l_\Delta^2}&=\bigg\|\sum_{n>0}\overline{e^{i\lambda_{-n,\Delta}(-n)\Delta(-n+\Delta)t}v_{-n}\overline{v_{-n+\Delta}}}\bigg\|_{l_\Delta^2}\\
		&=\bigg\|\sum_{n>0}e^{i\lambda_{n,-\Delta}n(-\Delta)(n-\Delta)t}\overline{v}_n\overline{\overline v_{n-\Delta}}\bigg\|_{l_\Delta^2}\\
		&=\bigg\|\sum_{n>0}e^{i\lambda_{n,\Delta} n\Delta(n+\Delta)t}\overline{v}_n\overline{\overline v_{n+\Delta}}\bigg\|_{l_\Delta^2}.
	\end{align*}
Also since
	\begin{align*}
		\bigg\|\sum_ne^{i\lambda_{n,\Delta} n\Delta(n+\Delta)t}v_n\overline{v_{n+\Delta}}\bigg\|_{l_{\Delta_{>0}}^2}&=\bigg\|\sum_ne^{i\lambda_{n-\Delta,\Delta}(n-\Delta)\Delta nt}v_{n-\Delta}\overline{v_n}\bigg\|_{l_{\Delta_{>0}}^2}\\
		&=\bigg\|\sum_ne^{-i\lambda_{n+\Delta,-\Delta}(n+\Delta)\Delta nt}v_{n+\Delta}\overline{v_n}\bigg\|_{l_{\Delta_{<0}}^2}\\
		&=\bigg\|\sum_ne^{i\lambda_{n,\Delta}n\Delta (n+\Delta)t}v_{n}\overline{v_{n+\Delta}}\bigg\|_{l_{\Delta_{<0}}^2},
	\end{align*}
we may neglect negative $\Delta$ in \eqref{t2propffl2} as well.	
	For $\Delta=0$,
	$$\sum_ne^{i\lambda_{n,\Delta}n\Delta(n+\Delta)t}v_n\overline{v_{n+\Delta}}=\sum_n|v_n|^2=\|v\|_{L_x^2}^2.$$
Hereafter unrestricted summation indices $n$ and $\Delta$ run over $\mathbb Z_{\ge0}$ and $\mathbb Z_{>0}$, respectively. 

Now, let us define for $j\ge0$,
	$$v_{n,j}(t):=\sum_{|m|\sim2^{j}}\widehat{v_n}(m) e^{imt}.$$
Applying the Littlewood-Paley decompositions $v_n=\sum_{j\ge0}v_{n,j}$, one has
\begin{equation}\label{t2propffl2e}
\eqref{t2propffl2}\lesssim\left\|\sum_j\sum_{k\le j}\bigg\|\sum_ne^{i\lambda_{n,\Delta}n\Delta(n+\Delta)t}v_{n,j}\overline{v_{n+\Delta,k}}\bigg\|_{L_t^2}\right\|_{l_\Delta^2}^2.
\end{equation}
We want to bound \eqref{t2propffl2e} by $C\|v\|_{L_x^2H_t^\frac3{10}}^4$. We now 
divide $\Delta$-summation into the three cases
	\begin{align}
		\label{t2propdec1}\Delta^5\le&2^j,\\
		\label{t2propdec2}\Delta^4\le&2^j<\Delta^5,\\
		\label{t2propdec3}&2^j<\Delta^4.
	\end{align}

We first consider the contribution of {\eqref{t2propdec1}}.
We evaluate $L_t^2$-norm of \eqref{t2propffl2e} as follows:

\begin{align}
\nonumber&\bigg\|\sum_ne^{i\lambda_{n,\Delta}n\Delta(n+\Delta)t} v_{n,j}\overline{v_{n+\Delta,k}}\bigg\|_{L_t^2}\\
\nonumber&\qquad\le\sum_n\|v_{n,j}\overline{v_{n+\Delta,k}}\|_{L_t^2}=\sum_n\bigg\|\sum_{m_1}\widehat{v_{n,j}}(m_1)\widehat{\overline{v_{n+\Delta,k}}}(\cdot-m_1)\bigg\|_{l_m^2}\\
\nonumber&\qquad\le\sum_n\bigg\|\bigg(\sum_{m_1\atop\widehat{\overline{v_{n+\Delta,k}}}(\cdot-m_1)\ne0}1\bigg)^\frac12\bigg(\sum_{m_1}\big|\widehat{v_{n,j}}(m_1)\widehat{\overline{v_{n+\Delta,k}}}(\cdot-m_1)\big|^2\bigg)^\frac12\bigg\|_{l_m^2}\\
\label{t2propdec1bd0}&\qquad\lesssim2^{k/2}\sum_n\big\|v_{n,j}\big\|_{L_t^2}\big\|v_{n+\Delta,k}\big\|_{L_t^2}.
\end{align}
Here we used the Cauchy-Schwarz inequality and the fact that $\widehat{\overline{v_{n+\Delta,k}}}$ is non-zero for at most a constant multiple of $2^k$ values as well as $k\le j$.
Using the Cauchy-Schwarz inequality again, \eqref{t2propdec1bd0} can be further estimated as
\begin{align}
\nonumber&\bigg\|\sum_ne^{i\lambda_{n,\Delta}n\Delta(n+\Delta)t} v_{n,j}\overline{v_{n+\Delta,k}}\bigg\|_{L_t^2}\\
\nonumber&\ \ \lesssim2^{\frac{k}{2}}\bigg(\sum_n\|v_{n,j}\|_{L_t^2}^2\bigg)^{\frac{1}{2}}\bigg(\sum_n\|v_{n+\Delta,k}\|_{L_t^2}^2\bigg)^{\frac{1}{2}}\\
\label{t2propdec1bd1}&\ \ \le2^{\frac{k}{5}}2^{-\frac{3}{10}j}\bigg(\sum_n\sum_{|m|\sim2^j}\langle m\rangle^{\frac{3}{5}}\big|\widehat{v_n}(m)\big|^2\bigg)^{\frac{1}{2}}\bigg(\sum_n\sum_{|m|\sim2^k}\langle m\rangle^{\frac{3}{5}}\big|\widehat{v_{n+\Delta}}(m)\big|^2\bigg)^{\frac{1}{2}}.
\end{align}
Note that the last factor is at most $\|v\|_{L_x^2H_t^\frac3{10}}$. 
Applying  \eqref{t2propdec1bd1} to \eqref{t2propffl2e}, we have
\begin{align}
\nonumber\bigg\|\sum_{2^j \ge \Delta^5}&\sum_{k\le j}\bigg\|\sum_ne^{i\lambda_{n,\Delta}n\Delta(n+\Delta)t}v_{n,j}\overline{v_{n+\Delta,k}}\bigg\|_{L_t^2}\bigg\|_{l_\Delta^2}^2\\
\label{t2propdec1bd2}&\lesssim\bigg\|\sum_{2^j \ge \Delta^5}2^{-\frac j{10}}\bigg(\sum_n\sum_{|m|\sim2^j}\langle m\rangle^\frac35\big|\widehat{v_n}(m)\big|^2\bigg)^\frac12 \bigg\|_{l_\Delta^2}^2\|v\|^2_{L_x^2H_t^\frac3{10}}.
\end{align}
To estimate the first factor of \eqref{t2propdec1bd2}, we write $\sum_\Delta=\sum_{l\ge0}\sum_{\Delta\sim2^l}$ and set $j=5l+j'$. Then using the Cauchy-Schwarz inequality to the $j'$-sum, we have 
\begin{align*}
&\sum_{l\ge0}\sum_{\Delta\sim2^l}2^{-l}\bigg(\sum_{2^{j'}\gtrsim1}2^{-\frac{j'}{20}}\bigg(2^{-\frac{j'}{10}}\sum_n\sum_{|m|\sim2^{5l+j'}}\langle m\rangle^\frac35\big|\widehat{v_n}(m)\big|^2\bigg)^\frac12\bigg)^2\\
&\quad\lesssim\sum_{l\ge0}\bigg(\Big(\sum_{2^{j'}\gtrsim1}2^{-\frac{j'}{10}}\Big)^\frac12\Big(\sum_{2^{j'}\gtrsim1}2^{-\frac{j'}{10}}\sum_n\sum_{|m|\sim2^{5l+j'}}\langle m\rangle^\frac35\big|\widehat{v_n}(m)\big|^2\Big)^\frac12\bigg)^2\lesssim \|v\|_{L_x^2H_t^\frac3{10}}^2
\end{align*}
and the contribution of \eqref{t2propdec1} to \eqref{t2propffl2e} is at most a constant multiple of $\|v\|_{L_x^2H_t^\frac3{10}}^4$.

Now we consider the second case \eqref{t2propdec2}. 
Since 
$$\mathcal F(v_{n,j}\overline{v_{n+\Delta,k}})(m)=\sum_{m_1}\widehat{v_{n,j}}(m-m_1)\widehat{\overline{v_{n+\Delta,k}}}(m_1)$$
and the summand is non-zero only when $|m-m_1|\sim2^j$ and $|m_1|\sim2^k$, we see that for $k\le j$,
$$\text{supp}(\mathcal F(v_{n,j}\overline{v_{n+\Delta,k}}))\subset[\sim-2^j,\sim2^j].$$
Recall that $n\ge0$, $\Delta>0$ and we set $\Gamma_\Delta(n):=\lambda_{n,\Delta} n\Delta(n+\Delta)$.
For fixed $\Delta$, the gaps of the sequence $\{\Gamma_\Delta(n)\}_n$ are at least $C_{\alpha,\beta}\Delta^4$.
We shall split $\sum_n$ into $\sum_{0\le a\le d-1}\sum_{n\in\mathcal{N}_a}$ with $d\sim_{\alpha,\beta}2^j/\Delta^4$; $\mathcal{N}_a$ denotes the equivalence class of $a$ mod $d$. Note that for $n_1,n_2\in\mathcal N_a$, $n_1\ne n_2$, we have $|\Gamma_{\Delta}(n_1)-\Gamma_{\Delta}(n_2)|\gg2^j$.
By Parseval's identity and orthogonality considerations, we have 
\begin{align*}
&\bigg\|\sum_{n\in\mathcal N_a}e^{i\lambda_{n,\Delta}n\Delta(n+\Delta)t}v_{n,j}\overline{v_{n+\Delta,k}}\bigg\|_{L_t^2}^2\\
&=\sum_{n\in\mathcal N_a} \|v_{n,j}\overline{v_{n+\Delta,k}}\|_{L_t^2}^2\\
&\quad+\!\int\!\!\!\sum_{n_1,n_2\in\mathcal N_a\atop n_1\ne n_2}\!\!\!e^{i\lambda_{n_1,\Delta}n_1\Delta(n_1+\Delta)t}(v_{n_1,j}\overline{v_{n_1+\Delta,k}})(t)\,\overline{e^{i\lambda_{n_2,\Delta}n_2\Delta(n_2+\Delta)t}(v_{n_2,j}\overline{v_{n_2+\Delta,k}})(t)}\,dt\\
&=\sum_{n\in\mathcal N_a}\|v_{n,j}\overline{v_{n+\Delta,k}}\|_{L_t^2}^2\\
&\quad+\sum_m\sum_{n_1,n_2\in\mathcal N_a\atop n_1\ne n_2}\mathcal F(v_{n_1,j}\overline{v_{n_1+\Delta,k}})(m-\Gamma_{\Delta}(n_1))\overline{\mathcal F(v_{n_2,j}\overline{v_{n_2+\Delta,k}})}(m-\Gamma_{\Delta}(n_2))\\
&=\sum_{n\in\mathcal N_a}\|v_{n,j}\overline{v_{n+\Delta,k}}\|_{L_t^2}^2.
\end{align*}
Thus, getting back to \eqref{t2propffl2e}, we first see
\begin{align*}
\bigg\|\sum_{a=0}^{d-1}\sum_{n\in\mathcal N_a}e^{i\lambda_{n,\Delta}n\Delta(n+\Delta)t}v_{n,j}\overline{v_{n+\Delta,k}}\bigg\|_{L_t^2}
&\le\sum_{a=0}^{d-1}\bigg\|\sum_{n\in\mathcal N_a}e^{i\lambda_{n,\Delta}n\Delta(n+\Delta)t}v_{n,j}\overline{v_{n+\Delta,k}}\bigg\|_{L_t^2}\\
&=\sum_{a=0}^{d-1}\bigg(\sum_{n\in\mathcal N_a}\|v_{n,j}\overline{v_{n+\Delta,k}}\|_{L_t^2}^2\bigg)^{\frac12}\\
&\lesssim_{\alpha,\beta}\bigg(\frac{2^j}{\Delta^4}\bigg)^{\frac12}\bigg(\sum_n\|v_{n,j}\overline{v_{n+\Delta,k}}\|_{L_t^2}^2\bigg)^{\frac12}\\
&\lesssim\frac{2^{\frac j2}}{\Delta^2}2^{\frac{k}{2}}\bigg(\sum_n\|v_{n,j}\|_{L_t^2}^2\|v_{n+\Delta,k}\|_{L_t^2}^2\bigg)^{\frac12}.
\end{align*}
The last inequality follows from similar steps to \eqref{t2propdec1bd0}. Then applying the Cauchy-Schwarz inequality, the contribution of \eqref{t2propdec2} to \eqref{t2propffl2e} is bounded by
\begin{align*}
&\sum_\Delta\frac1{\Delta^4}\bigg(\sum_{j,k\atop{j\ge k\atop\Delta^4\le2^j<\Delta^5}}2^{\frac j5}2^{\frac k5}\bigg(\sum_n\Big(\sum_{|m|\sim2^j}\langle m\rangle^{\frac35}\big|\widehat{v_n}(m)\big|^2\sum_{|m|\sim2^k}\langle m\rangle^{\frac35}\big|\widehat{v_{n+\Delta}}(m)|^2\Big)\bigg)^{\frac12}\bigg)^2\\
&\quad\le\sum_\Delta\frac1{\Delta^4}\bigg(\sum_{j,k\atop{j\ge k\atop\Delta^4\le2^j<\Delta^5}}2^{\frac {2j}5}2^{\frac {2k}5}\bigg)\bigg(\sum_{j,k\atop{j\ge k\atop\Delta^4\le2^j<\Delta^5}}\sum_n\|v_{n,j}\|_{L_x^2H_t^\frac3{10}}^2\|v_{n+\Delta,k}\|_{L_x^2H_t^\frac3{10}}^2\bigg)\\
&\quad\lesssim\sum_n\|v_{n}\|_{L_x^2H_t^\frac3{10}}^2\sum_\Delta\|v_{n+\Delta}\|_{L_x^2H_t^\frac3{10}}^2=\|v\|_{L_x^2H_t^\frac3{10}}^4.
\end{align*}
Finally, the last case \eqref{t2propdec3} can be handled in a similar manner to the case \eqref{t2propdec2} except that one does not have to split $\sum_n$; we omit the details.\qed

\begin{rem}\label{remref}
Here \eqref{w3}, \eqref{t2propeq}, \eqref{t2propcor} are sharper than their counterparts in \cite B because the Kawahara equation has better dispersion. In particular the increment of the frequency sequence in the above proof, $\{\lambda_{n,\Delta}n\Delta(n+\Delta)\}_n$, is greater, and we also have a higher bound in \eqref{powerbound}. In fact, we have $\|f\|_{L^4}\lesssim\|f\|_{X^{0,\frac14\left(1+\frac1{1+\kappa}\right)}}$ for the gaps of $\Delta^\kappa$, and in such case, for \eqref{w3} we have the following:
\begin{align*}
&\|\partial_x(u_1u_2)\|_{X_\delta^{s,\max\left\{\frac1\kappa,\frac14\left(1+\frac1{1+\kappa}\right)\right\}}}\\
&\lesssim\!\|u_1\|_{X_\delta^{s,\max\left\{\frac1\kappa,\frac14\left(1+\frac1{1+\kappa}\right)\right\}}}\|u_2\|_{X_\delta^{s,\frac14\left(1+\frac1{1+\kappa}\right)}}\!+\!\|u_1\|_{X_\delta^{s,\frac14\left(1+\frac1{1+\kappa}\right)}}\|u_2\|_{X_\delta^{s,\max\left\{\frac1\kappa,\frac14\left(1+\frac1{1+\kappa}\right)\right\}}},\\
&\left\|\frac{\langle k\rangle^s\mathcal F{(\partial_x(u_1u_2))}}{\langle\tau+\alpha k^5-\beta k^3\rangle^{\max\left\{\frac1{2\kappa}+\frac12,\frac14\left(1+\frac1{1+\kappa}\right)+\frac12+\right\}}}\right\|_{l_k^2L_\tau^1}\\
&\lesssim\!\|u_1\|_{X_\delta^{s,\max\left\{\frac1\kappa,\frac14\left(1+\frac1{1+\kappa}\right)\right\}}}\|u_2\|_{X_\delta^{s,\frac14\left(1+\frac1{1+\kappa}\right)}}\!+\!\|u_1\|_{X_\delta^{s,\frac14\left(1+\frac1{1+\kappa}\right)}}\|u_2\|_{X_\delta^{s,\max\left\{\frac1\kappa,\frac14\left(1+\frac1{1+\kappa}\right)\right\}}}.
\end{align*}
\end{rem}
\begin{rem}
Although not necessary in our calculation, one can in fact have \eqref{w3} up to $s\ge-1/2$. We see
\begin{equation}\label{amidone}
\frac{|k|^{s+1}}{|k_1|^s|k-k_1|^s}=\frac{|k|^{s+\frac12}|k|^\frac12}{|k_1(k-k_1)|^s}\lesssim\frac{|k_1(k-k_1)|^{s+\frac12}|k|^\frac12}{|k_1(k-k_1)|^s}=|k_1(k-k_1)k|^\frac12.
\end{equation}
Note that one may use a sharper lower bound for \eqref{powerbound}:
\begin{align*}
|(\tau+\alpha k^5-\beta k^3)-(\tau_1+\alpha k_1^5-\beta k_1^3)&-(\tau-\tau_1+\alpha(k-k_1)^5-\beta(k-k_1)^3)|\\
&\!\!\!\!\!\!=\frac12|k_1(k-k_1)k(5\alpha(k^2+k_1^2+(k-k_1)^2)-6\beta)|\\
&\!\!\!\!\!\!\gtrsim_{\alpha,\beta}|k_1(k-k_1)k|(k^2+k_1^2+(k-k_1)^2)\\
&\!\!\!\!\!\!\ge|k_1(k-k_1)k|^\frac53.
\end{align*}
This is the rightmost side of \eqref{amidone} to the power of 10/3, from which we may replace $|k|^{s+1}|k_1|^{-s}|k-k_1|^{-s}$ in \eqref{fflem1l1} and \eqref f with $|k_1(k-k_1)k|^\frac12$ instead. This leads to \eqref{w3}.
At the cost of a higher $b$-index in $X^{s,b}$-norm on the right-hand side of \eqref{w3}, one may go further: for $s\ge-2/3$, we have
$$\|\partial_x(u_1u_2)\|_{Z_\delta^s}\lesssim\|u_1\|_{X_\delta^{s,\frac13}}\|u_2\|_{X_\delta^{s,\frac3{10}}}+\|u_1\|_{X_\delta^{s,\frac3{10}}}\|u_2\|_{X_\delta^{s,\frac13}}\le\|u_1\|_{X_\delta^{s,\frac13}}\|u_2\|_{X_\delta^{s,\frac13}}.$$
One may indeed extend this remark similarly to Remark \ref{remref}. See \cite{KPV} for an analogous result for the KdV equation.
\end{rem}


\end{document}